\begin{document}

\title*{Explicit Matrices with the Restricted Isometry Property: Breaking the Square-Root Bottleneck}
\titlerunning{Explicit Matrices with the Restricted Isometry Property}
\author{Dustin G. Mixon}
\institute{Dustin G. Mixon \at Air Force Institute of Technology, Wright-Patterson Air Force Base, Ohio, USA, \email{dustin.mixon@afit.edu}}
%
%
\maketitle


\abstract{
Matrices with the restricted isometry property (RIP) are of particular interest in compressed sensing.
To date, the best known RIP matrices are constructed using random processes, while explicit constructions are notorious for performing at the ``square-root bottleneck,'' i.e., they only accept sparsity levels on the order of the square root of the number of measurements.
The only known explicit matrix which surpasses this bottleneck was constructed by Bourgain, Dilworth, Ford, Konyagin and Kutzarova in~\cite{BourgainDFKK:11}.
This chapter provides three contributions to further the groundbreaking work of Bourgain et al.:
(i) we develop an intuition for their matrix construction and underlying proof techniques; (ii) we prove a generalized version of their main result; and (iii) we apply this more general result to maximize the extent to which their matrix construction surpasses the square-root bottleneck.
}

\section{Introduction}

A matrix $\Phi$ is said to satisfy the $(K,\delta)$-\textit{restricted isometry property (RIP)} if
\begin{equation*}
(1-\delta)\|x\|^2\leq\|\Phi x\|^2\leq(1+\delta)\|x\|^2
\end{equation*}
for every $K$-sparse vector $x$.
RIP matrices are useful when compressively sensing signals which are sparse in some known orthonormal basis.
Indeed, if there is an orthogonal sparsity matrix $\Psi$ such that every signal of interest $x$ has the property that $\Psi x$ is $K$-sparse, then any such $x$ can be stably reconstructed from measurements of the form $y=Ax$ by minimizing $\|\Psi x\|_1$ subject to the measurements, provided $A\Psi^{-1}$ satisfies $(2K,\delta)$-RIP with $\delta<\sqrt{2}-1$~\cite{Candes:08}.
For sensing regimes in which measurements are costly, it is desirable to minimize the number of measurements necessary for signal reconstruction; this corresponds to the number of rows $M$ in the $M\times N$ sensing matrix $A$.
One can apply the theory of Gelfand widths to show that stable reconstruction by L1-minimization requires $K=O(M/\log(N/M))$~\cite{BaraniukDDW:08}, and random matrices show that this bound is essentially tight; indeed, $M\times N$ matrices with iid subgaussian entries satisfy $(2K,\delta)$-RIP with high probability provided $M=\Omega_\delta(K\log(N/K))$~\cite{FoucartR:13}.

Unfortunately, random matrices are not always RIP, though the failure rate vanishes asymptotically.
In applications, you might wish to verify that your randomly drawn matrix actually satisfies RIP before designing your sensing platform around that matrix, but unfortunately, this is NP-hard in general~\cite{BandeiraDMS:13}.
As such, one is forced to blindly assume that the randomly drawn matrix is RIP, and admittedly, this is a reasonable assumption considering the failure rate.
Still, this is dissatisfying from a theoretical perspective, and it motivates the construction of explicit RIP matrices:

\begin{definition}
Let $\mathrm{ExRIP}[z]$ denote the following statement:

There exists an explicit family of $M\times N$ matrices with arbitrarily large aspect ratio $N/M$ which are $(K,\delta)$-RIP with $K=\Omega(M^{z-\epsilon})$ for all $\epsilon>0$ and $\delta<\sqrt{2}-1$.
\end{definition}

Since there exist (non-explicit) matrices satisfying $z=1$ above, the goal is to prove $\mathrm{ExRIP}[1]$.
The most common way to demonstrate that an explicit matrix $\Phi$ satisfies RIP is to leverage the pairwise incoherence between the columns of $\Phi$.
Indeed, it is straightforward to prove $\mathrm{ExRIP}[1/2]$ by taking $\Phi$ to have near-optimally incoherent unit-norm columns and appealing to interpolation of operators or Gershgorin's circle theorem (e.g., see~\cite{ApplebaumHSC:09,Devore:07,FickusMT:12}).
The emergence of this ``square-root bottleneck'' compelled Tao to pose the explicit construction of RIP matrices as an open problem~\cite{Tao:07}.
Since then, only one construction has managed to break the bottleneck:
In~\cite{BourgainDFKK:11}, Bourgain, Dilworth, Ford, Konyagin and Kutzarova prove $\mathrm{ExRIP}[1/2+\epsilon_0]$ for some undisclosed $\epsilon_0>0$.
This constant has since been estimated as $\epsilon_0\approx 5.5169\times 10^{-28}$~\cite{Mixon:13a}.

Instead of estimating $\delta$ in terms of coherence, Bourgain et al.\ leverage additive combinatorics to construct $\Phi$ and to demonstrate certain cancellations in the Gram matrix $\Phi^*\Phi$.
Today (three years later), this is the only known explicit construction which breaks the square-root bottleneck, thereby leading to two natural questions:
\begin{itemize}
\item What are the proof techniques that Bourgain et al.\ applied?
\item Can we optimize the analysis to increase $\epsilon_0$?
\end{itemize}
These questions were investigated recently in a series of blog posts~\cite{Mixon:13a,Mixon:13b,Mixon:13c}, on which this chapter is based.
In the next section, we provide some preliminaries---we first cover the techniques used in~\cite{BourgainDFKK:11} to demonstrate RIP, and then we discuss some basic additive combinatorics to motivate the matrix construction.
Section~3 then describes the construction of $\Phi$, namely a subcollection of the chirps studied in~\cite{CasazzaF:06}, and discusses one method of selecting chirps (i.e., the method of Bourgain et al.).
Section~4 provides the main result, namely \textit{the BDFKK restricted isometry machine}, which says that a ``good'' selection of chirps will result in an RIP matrix construction which breaks the square-root bottleneck.
This is a generalization of the main result in~\cite{BourgainDFKK:11}, as it offers more general sufficient conditions for good chirp selection, but the proof is similar.
After generalizing the sufficient conditions, we optimize over these conditions to increase the largest known $\epsilon_0$ for which $\mathrm{ExRIP}[1/2+\epsilon_0]$ holds: 
\begin{equation*}
\epsilon_0\approx 4.4466\times 10^{-24}.
\end{equation*}
Of course, any improvement to the chirp selection method will further increase this constant, and hopefully, the BDFKK restricted isometry machine and overall intuition provided in this chapter will foster such progress.
Section~5 contains the proofs of certain technical lemmas that are used to prove the main result.

\section{Preliminaries}

The goal of this section is to provide some intuition for the main ideas in~\cite{BourgainDFKK:11}.
We first explain the overall proof technique for demonstrating RIP (this is the vehicle for breaking the square-root bottleneck), and then we introduce some basic ideas from additive combinatorics.

\subsection{The Big-Picture Techniques}

Before explaining how Bourgain et al.\ broke the square-root bottleneck, let's briefly discuss the more common, coherence-based technique to demonstrate RIP.
Let $\Phi_\mathcal{K}$ denote the submatrix of $\Phi$ whose columns are indexed by $\mathcal{K}\subseteq\{1,\ldots,N\}$.
Then $(K,\delta)$-RIP equivalently states that, for every $\mathcal{K}$ of size $K$, the eigenvalues of $\Phi_\mathcal{K}^*\Phi_\mathcal{K}$ lie in $[1-\delta,1+\delta]$.
As such, we can prove that a matrix is RIP by approximating eigenvalues.
To this end, if we assume the columns of $\Phi$ have unit norm, and if we let $\mu$ denote the largest off-diagonal entry of $\Phi^*\Phi$ in absolute value (this is the worst-case coherence of the columns of $\Phi$), then the Gershgorin circle theorem implies that $\Phi$ is $(K,(K-1)\mu)$-RIP.
Unfortunately, the coherence can't be too small, due to the Welch bound~\cite{Welch:74}:
\begin{equation*}
\mu\geq\sqrt{\frac{N-M}{M(N-1)}},
\end{equation*}
which is $\Omega(M^{-1/2})$ provided $N\geq cM$ for some $c>1$.
Thus, to get $(K-1)\mu=\delta<1/2$, we require $K<1/(2\mu)+1=O(M^{1/2})$.
This is much smaller than the random RIP constructions which instead take $K=O(M^{1-\epsilon})$ for all $\epsilon>0$, thereby revealing the shortcoming of the Gershgorin technique.

Now let's discuss the alternative techniques that Bourgain et al. use.
The main idea is to convert the RIP statement, which concerns all $K$-sparse vectors simultaneously, into a statement about finitely many vectors:

\begin{definition}[flat RIP]
We say $\Phi=[\varphi_1\cdots\varphi_N]$ satisfies $(K,\theta)$-\textit{flat RIP} if for every disjoint $I,J\subseteq\{1,\ldots,N\}$ of size $\leq K$,
\begin{equation*}
\bigg|\bigg\langle\sum_{i\in I}\varphi_i,\sum_{j\in J}\varphi_j\bigg\rangle\bigg|\leq \theta\sqrt{|I||J|}.
\end{equation*}
\end{definition}

\begin{lemma}[essentially Lemma~3 in~\cite{BourgainDFKK:11}, cf. Theorem 13 in~\cite{BandeiraFMW:13}]
\label{lemma.a}
If $\Phi$ has $(K,\theta)$-flat RIP and unit-norm columns, then $\Phi$ has $(K,150\theta\log K)$-RIP.
\end{lemma}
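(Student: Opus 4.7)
The plan is to show that $\|\Phi_{\mathcal{K}}^*\Phi_{\mathcal{K}} - I\|_{\mathrm{op}} \le 150\theta\log K$ for every $\mathcal{K}\subseteq\{1,\ldots,N\}$ with $|\mathcal{K}|\le K$, since this is equivalent to the $(K,150\theta\log K)$-RIP condition. Write $G := \Phi_{\mathcal{K}}^*\Phi_{\mathcal{K}} - I$; because the columns of $\Phi$ are unit-norm, the diagonal of $G$ vanishes, and because $G$ is Hermitian its operator norm equals $\sup_{\|x\|_2=1}|\langle Gx,x\rangle|$. After splitting $x$ into its real and imaginary parts, and each of those into positive and negative parts, I may assume $x$ has nonnegative real entries with $\|x\|_\infty = 1$, at the cost of an absolute constant factor.

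The main device is a dyadic decomposition of the support by magnitude: for $s=0,1,\ldots$, set $I_s := \{i\in\mathcal{K} : 2^{-s-1} < x_i \le 2^{-s}\}$ and $x^{(s)} := x\cdot \mathbf{1}_{I_s}$. The sets $I_s$ are disjoint subsets of $\mathcal{K}$, each of size at most $K$, and the lower bound $x_i > 2^{-s-1}$ on the entries of $x^{(s)}$ forces $|I_s|\le 4^{s+1}\|x^{(s)}\|_2^2$. Entries with $x_i\le 1/K$ contribute at most $O(1/\sqrt{K})$ in $\ell^2$-norm and can be absorbed into the error, so only $S = O(\log K)$ levels need to be tracked.

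For the cross terms ($s\ne t$) the supports $I_s$ and $I_t$ are disjoint and of size $\le K$, so flat RIP together with the crude pointwise bounds $x_i\le 2^{-s}$ and $x_j\le 2^{-t}$ yields
\[
|\langle Gx^{(s)},x^{(t)}\rangle| \le 2^{-s}\cdot 2^{-t}\cdot \theta\sqrt{|I_s|\,|I_t|} \le 4\theta\,\|x^{(s)}\|_2\,\|x^{(t)}\|_2.
\]
The diagonal blocks ($s=t$) are more delicate because flat RIP requires two disjoint sets. I would handle them by iteratively bisecting $I_s$ into halves and applying flat RIP to the resulting cross term, which recursively yields $|\sum_{i\ne j\in I_s}\langle\varphi_i,\varphi_j\rangle| = O(\theta|I_s|\log|I_s|)$ and hence $|\langle Gx^{(s)},x^{(s)}\rangle| = O(\theta(\log K)\|x^{(s)}\|_2^2)$, using $|I_s|\le 4^{s+1}\|x^{(s)}\|_2^2$ to absorb the $4^{-s}$ from $\|x^{(s)}\|_\infty^2$.

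Summing over the $O(\log K)$ relevant levels, the diagonal contribution is $O(\theta\log K)\sum_s\|x^{(s)}\|_2^2 = O(\theta\log K\cdot\|x\|_2^2)$, while the off-diagonal contribution is at most $4\theta(\sum_s\|x^{(s)}\|_2)^2 \le 4\theta(S+1)\sum_s\|x^{(s)}\|_2^2 = O(\theta\log K\cdot\|x\|_2^2)$ by Cauchy--Schwarz. Combining gives $|\langle Gx,x\rangle| = O(\theta\log K)$, as desired. The main obstacle will be sharpening all the constants to reach the advertised $150$: the recursive bisection for the diagonal blocks and the real/imaginary/$\pm$ reduction both introduce multiplicative losses, so a careful bookkeeping---possibly replacing the deterministic bisection by a randomized partition argument, or working directly with the bilinear form $\langle Gx,y\rangle$ on disjointly supported test vectors so that the diagonal blocks never arise in the first place---will be needed to close the gap.
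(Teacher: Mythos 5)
The paper states this lemma by citation (Lemma~3 of~\cite{BourgainDFKK:11} and Theorem~13 of~\cite{BandeiraFMW:13}) and gives no proof of its own, so I am assessing your proposal against the standard argument those references employ. Your overall plan---reduce to $\|\Phi_{\mathcal{K}}^*\Phi_{\mathcal{K}}-I\|_{\mathrm{op}}$, reduce to nonnegative real $x$, decompose the support dyadically by the magnitude of the entries, control cross blocks with flat RIP and diagonal blocks with a disjoint-split trick---is exactly that argument, and the bookkeeping $|I_s|\le 4^{s+1}\|x^{(s)}\|_2^2$, $O(\log K)$ levels, Cauchy--Schwarz over levels is right. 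So the route is correct in outline.

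There is, however, a genuine gap in the step that bounds the cross terms. You write that ``flat RIP together with the crude pointwise bounds $x_i\le 2^{-s}$ and $x_j\le 2^{-t}$ yields'' $|\langle Gx^{(s)},x^{(t)}\rangle|\le 2^{-s}2^{-t}\theta\sqrt{|I_s||I_t|}$. Flat RIP only bounds \emph{unweighted} sums $\sum_{i\in I}\sum_{j\in J}\langle\varphi_i,\varphi_j\rangle$, and you cannot pull the sup of the weights out of a bilinear sum whose summands have arbitrary sign and phase: as a toy example, $\sum_i a_ib_i$ with $0\le a_i\le 1$ can be as large as $\sum_i|b_i|$ even when $\sum_i b_i=0$. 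The fix is a convexity (equivalently, layer-cake) argument: on the product of cubes $[0,2^{-s}]^{I_s}\times[0,2^{-t}]^{I_t}$ the function $(u,v)\mapsto|\langle Gu,v\rangle|$ attains its maximum at a vertex pair $u=2^{-s}\mathbf{1}_A$, $v=2^{-t}\mathbf{1}_B$ with $A\subseteq I_s$, $B\subseteq I_t$, and \emph{then} flat RIP gives $2^{-s}2^{-t}\theta\sqrt{|A||B|}\le 2^{-s}2^{-t}\theta\sqrt{|I_s||I_t|}$. The same device is needed inside your bisection for the diagonal blocks (and there the two level sets overlap, so nest them before bisecting). Two smaller notes: the normalization should read $\|x\|_\infty\le 1$ (a consequence of $\|x\|_2=1$), not $\|x\|_\infty=1$; and, as you anticipate, the constant $150$ will not fall out of the real/imaginary-and-sign splitting plus bisection without care---a cleaner route for the diagonal block is the random-subset split, which gives $|\langle Gx^{(s)},x^{(s)}\rangle|\le 4\max_{S}|\langle Gx^{(s)}\mathbf{1}_S, x^{(s)}\mathbf{1}_{I_s\setminus S}\rangle|=O(\theta\|x^{(s)}\|_2^2)$ per level with no extra $\log$, and it is also what lets you avoid the positive/negative splitting entirely.
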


Unlike the coherence argument, flat RIP doesn't lead to much loss in $K$.
In particular, \cite{BandeiraFMW:13} shows that random matrices satisfy $(K,\theta)$-flat RIP with $\theta=O(\delta/\log K)$ when $M=\Omega((K/\delta^2)\log^2 K\log N)$.
As such, it makes sense that flat RIP would be a vehicle to break the square-root bottleneck.
However, in practice, it's difficult to control both the left- and right-hand sides of the flat RIP inequality---it would be much easier if we only had to worry about getting cancellations, and not getting different levels of cancellation for different-sized subsets. This leads to the following:

\begin{definition}[weak flat RIP]
\label{defn.weak flat RIP}
We say $\Phi=[\varphi_1\cdots\varphi_N]$ satisfies $(K,\theta')$-\textit{weak flat RIP} if for every disjoint $I,J\subseteq\{1,\ldots,N\}$ of size $\leq K$,
\begin{equation*}
\bigg|\bigg\langle\sum_{i\in I}\varphi_i,\sum_{j\in J}\varphi_j\bigg\rangle\bigg|\leq \theta' K.
\end{equation*}
\end{definition}

\begin{lemma}[essentially Lemma~1 in~\cite{BourgainDFKK:11}]
\label{lemma.b}
If $\Phi$ has $(K,\theta')$-weak flat RIP and worst-case coherence $\mu\leq1/K$, then $\Phi$ has $(K,\sqrt{\theta'})$-flat RIP.
\end{lemma}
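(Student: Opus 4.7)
The plan is to produce two independent upper bounds on the inner product $\langle \sum_{i\in I}\varphi_i, \sum_{j\in J}\varphi_j\rangle$ and then combine them with the geometric-mean inequality $\min(x,y)\le\sqrt{xy}$ to recover the $\sqrt{|I||J|}$ scaling required by flat RIP.

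Fix disjoint $I,J\subseteq\{1,\ldots,N\}$ each of size at most $K$, and write $S:=\bigl|\bigl\langle \sum_{i\in I}\varphi_i,\sum_{j\in J}\varphi_j\bigr\rangle\bigr|$. The first bound is immediate from the weak flat RIP hypothesis: $S\le\theta' K$. The second bound comes from the coherence assumption. Since $I$ and $J$ are disjoint, every pair $(i,j)\in I\times J$ has $i\ne j$, so $|\langle\varphi_i,\varphi_j\rangle|\le\mu\le 1/K$. Expanding the inner product and applying the triangle inequality gives
\begin{equation*}
S \;\le\; \sum_{i\in I}\sum_{j\in J} |\langle\varphi_i,\varphi_j\rangle| \;\le\; \frac{|I|\,|J|}{K}.
\end{equation*}

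Combining the two bounds via $\min(x,y)\le\sqrt{xy}$, I would conclude
\begin{equation*}
S \;\le\; \min\!\left(\theta' K,\;\frac{|I|\,|J|}{K}\right) \;\le\; \sqrt{\theta' K\cdot\frac{|I|\,|J|}{K}} \;=\; \sqrt{\theta'}\,\sqrt{|I|\,|J|},
\end{equation*}
which is exactly the $(K,\sqrt{\theta'})$-flat RIP inequality.

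There is no real obstacle here: both bounds are one-line consequences of the hypotheses, and the interpolation step is the trivial AM-GM style inequality. The only thing to verify is that disjointness of $I$ and $J$ is what licenses replacing $|\langle\varphi_i,\varphi_j\rangle|$ by the worst-case coherence for every term in the double sum (no diagonal contributions appear), which is why the second bound does not lose a diagonal term of size $|I\cap J|$.
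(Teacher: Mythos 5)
Your proof is correct and is essentially identical to the paper's: both establish the two bounds $S\le\theta' K$ (from weak flat RIP) and $S\le|I||J|/K$ (triangle inequality plus coherence), then combine them via $\min(x,y)\le\sqrt{xy}$. Your remark about disjointness eliminating diagonal terms is a fine observation but is implicit in the paper as well.
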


\begin{proof}
By the triangle inequality, we have
\begin{equation*}
\bigg|\bigg\langle\sum_{i\in I}\varphi_i,\sum_{j\in J}\varphi_j\bigg\rangle\bigg|\leq\sum_{i\in I}\sum_{j\in J}|\langle\varphi_i,\varphi_j\rangle|\leq |I||J|\mu\leq |I||J|/K.
\end{equation*}
Since $\Phi$ also has weak flat RIP, we then have
\begin{equation*}
\bigg|\bigg\langle\sum_{i\in I}\varphi_i,\sum_{j\in J}\varphi_j\bigg\rangle\bigg|\leq\min\{\theta'K,|I||J|/K\}\leq\sqrt{\theta'|I||J|}.
\quad\qed
\end{equation*}
\end{proof}

Unfortunately, this coherence requirement puts $K$ back in the square-root bottleneck, since $\mu\leq1/K$ is equivalent to $K\leq1/\mu=O(M^{1/2})$.
To rectify this, Bourgain et al. use a trick in which a modest $K$ with tiny $\delta$ can be converted to a large $K$ with modest $\delta$:

\begin{lemma}[buried in Lemma 3 in~\cite{BourgainDFKK:11}, cf. Theorem 1 in~\cite{KoiranZ:12}]
\label{lemma.c}
If $\Phi$ has $(K,\delta)$-RIP, then $\Phi$ has $(sK,2s\delta)$-RIP for all $s\geq1$.
\end{lemma}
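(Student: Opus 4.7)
The plan is to convert the $(sK,2s\delta)$-RIP statement, which concerns arbitrary $sK$-sparse vectors, into a statement about inner products of small-support vectors via a polarization identity, and then reassemble the desired bound by partitioning the support of an arbitrary $sK$-sparse vector.

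First I would establish an auxiliary off-diagonal estimate: if $u,v$ have disjoint supports with $|\mathrm{supp}(u)|+|\mathrm{supp}(v)|\leq K$, then $|\langle\Phi u,\Phi v\rangle|\leq\delta\|u\|\|v\|$. Assuming by homogeneity that $\|u\|=\|v\|=1$, the vectors $u\pm v$ are $K$-sparse with $\|u\pm v\|^2=2$, so $(K,\delta)$-RIP gives $\|\Phi(u\pm v)\|^2\in[2(1-\delta),2(1+\delta)]$. The parallelogram identity
\begin{equation*}
\langle\Phi u,\Phi v\rangle=\tfrac{1}{4}\bigl(\|\Phi(u+v)\|^2-\|\Phi(u-v)\|^2\bigr)
\end{equation*}
then yields $|\langle\Phi u,\Phi v\rangle|\leq\delta$.

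Next, given an arbitrary $sK$-sparse $x$, I would partition its support into $t\leq 2s$ blocks, each of size at most $K/2$, so that the union of any two blocks has size at most $K$. Writing $x=\sum_{i=1}^{t}x_i$ with $x_i$ supported on the $i$th block, disjointness of supports gives $\|x\|^2=\sum_i\|x_i\|^2$, whereas
\begin{equation*}
\|\Phi x\|^2=\sum_{i=1}^{t}\|\Phi x_i\|^2+2\sum_{i<j}\langle\Phi x_i,\Phi x_j\rangle.
\end{equation*}
Applying $(K,\delta)$-RIP to each $x_i$ to bound the diagonal discrepancies by $\delta\|x_i\|^2$, and applying the auxiliary estimate to each pair to bound the cross terms by $\delta\|x_i\|\|x_j\|$, I obtain
\begin{equation*}
\bigl|\,\|\Phi x\|^2-\|x\|^2\,\bigr|\leq\delta\Bigl(\sum_{i=1}^{t}\|x_i\|\Bigr)^{\!2}\leq t\delta\sum_{i=1}^{t}\|x_i\|^2\leq 2s\delta\,\|x\|^2,
\end{equation*}
the middle inequality being Cauchy--Schwarz.

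The only real obstacle is the bookkeeping for non-integer $s$: achieving exactly $t\leq 2s$ pieces of size $\leq K/2$ is immediate when $s$ and $K$ are integers of appropriate parity, and only requires absorbing a harmless ceiling otherwise. Once the partition is chosen, the rest is a routine expansion plus Cauchy--Schwarz, so no deeper idea is required beyond the polarization step and the right granularity for the decomposition.
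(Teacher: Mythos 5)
The paper does not supply its own proof of this lemma; it is attributed to Lemma 3 of Bourgain et al.\ and Theorem 1 of Koiran--Zouzias. Your argument is the standard one for this kind of sparsity-boosting: polarize to get an off-diagonal estimate for disjointly supported pairs, then partition the support of a longer vector into blocks small enough that any two of them together fit in a $K$-window, and close with Cauchy--Schwarz. That is the right mechanism, and the chain
\begin{equation*}
\bigl|\|\Phi x\|^2-\|x\|^2\bigr|\leq\delta\Bigl(\sum_i\|x_i\|\Bigr)^2\leq t\,\delta\sum_i\|x_i\|^2=t\,\delta\|x\|^2
\end{equation*}
is correct.

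The one place to be honest with yourself is the claim $t\leq 2s$. Covering $\leq sK$ indices with blocks whose pairwise unions must stay $\leq K$ forces every block (save possibly one) to have size $\leq\lfloor K/2\rfloor$, so the block count is at least roughly $\lceil 2sK/K\rceil$ when $K$ is even, but strictly more than $2s$ when $K$ is odd --- for example $K=3$, $s=2$ forces $t\geq 5$, not $4$. You flag this as a ``harmless ceiling'' tied to parity, but it is not only about $s$; odd $K$ always costs you, even for integer $s$. In fairness, the lemma as stated is a constant-flexible ``trick'' (the paper even remarks that it ``will inherently exhibit some loss''), and in the application $s$ grows polynomially in $p$ while $\delta\to 0$, so replacing $2s$ by $\lceil 2s\rceil$ or $2s+O(1)$ changes nothing downstream. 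Still, if you want the bound literally as written you would need to either restrict to even $K$ and integral $2s$, or accept a slightly larger constant. One last nit: the identity $\langle\Phi u,\Phi v\rangle=\tfrac14(\|\Phi(u+v)\|^2-\|\Phi(u-v)\|^2)$ is the real polarization, and the matrices here are complex; but since the cross terms in the expansion of $\|\Phi x\|^2$ enter only through $2\operatorname{Re}\langle\Phi x_i,\Phi x_j\rangle$, bounding the real part is exactly what you need, so this is a presentational rather than substantive issue.
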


In~\cite{KoiranZ:12}, this trick is used to get RIP results for larger $K$ when testing RIP for smaller $K$.
For the explicit RIP matrix problem, we are stuck with proving how small $\delta$ is when $K$ on the order of $M^{1/2}$.
Note that this trick will inherently exhibit some loss in $K$.
Assuming the best possible scaling for all $N$, $K$ and $\delta$ is $M=\Theta((K/\delta^2)\log(N/K))$, then if $N=\mathrm{poly}(M)$, you can get $(M^{1/2},\delta)$-RIP only if $\delta=\Omega((\log^{1/2}M)/M^{1/4})$.
In this best-case scenario, you would want to pick $s=M^{1/4-\epsilon}$ for some $\epsilon>0$ and apply Lemma~\ref{lemma.c} to get $K=O(M^{3/4-\epsilon})$.
In some sense, this is another manifestation of the square-root bottleneck, but it would still be a huge achievement to saturate this bound.

\subsection{A Brief Introduction to Additive Combinatorics}

In this subsection, we briefly detail some key ideas from additive combinatorics; the reader is encouraged to see~\cite{TaoV:06} for a more complete introduction.
Given an additive group $G$ and finite sets $A,B\subseteq G$, we can define the sumset
\begin{equation*}
A+B:=\{a+b:a\in A,~b\in B\},
\end{equation*}
the difference set
\begin{equation*}
A-B:=\{a-b:a\in A,~b\in B\},
\end{equation*}
and the additive energy
\begin{equation*}
E(A,B):=\#\big\{(a_1,a_2,b_1,b_2)\in A^2\times B^2:a_1+b_1=a_2+b_2\big\}.
\end{equation*}
These definitions are useful in quantifying the additive structure of a set.
In particular, consider the following:

\begin{lemma}
A nonempty subset $A$ of some additive group $G$ satisfies the following inequalities:
\begin{description}[(iii)]
\item[(i)] $|A+A|\geq|A|$
\item[(ii)] $|A-A|\geq|A|$
\item[(iii)] $E(A,A)\leq|A|^3$
\end{description}
with equality precisely when $A$ is a translate of some subgroup of $G$.
\end{lemma}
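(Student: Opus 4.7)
\medskip

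\noindent\textbf{Proof proposal.} The three inequalities will follow from simple ``fix one coordinate'' arguments, and the equality characterizations all boil down to showing that the corresponding set $H := A - a_0$ (for any fixed $a_0 \in A$) is closed under an appropriate operation and hence is a subgroup.

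For (i) and (ii), I would fix some $a_0 \in A$ and observe the inclusions $A + a_0 \subseteq A + A$ and $A - a_0 \subseteq A - A$; each translate has cardinality $|A|$, so the two inequalities follow immediately. For the equality case in (i), $|A+A| = |A|$ forces $A + a_0 = A + A$, and repeating the argument with any $a \in A$ in place of $a_0$ yields $A + a = A + a_0$, i.e., $A = A + (a_0 - a)$ for every $a \in A$. Writing $H := A - a_0$, this translates to $H + h \subseteq H$ for every $h \in H$ (and $0 \in H$ since $a_0 \in A$), so $H$ is a finite subset of $G$ that is closed under addition and contains $0$; finiteness then forces $H$ to be a subgroup, and $A = H + a_0$ is a coset. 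The equality case in (ii) is handled identically, since $|A-A|=|A|$ also forces $A + a = A + a_0$ for all $a \in A$.

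For (iii), I would count quadruples by fixing three coordinates at a time: given any $(a_1, a_2, b_1) \in A^3$, the equation $a_1 + b_1 = a_2 + b_2$ determines $b_2$ uniquely, and it contributes to $E(A,A)$ iff $b_2 \in A$. This gives $E(A,A) \leq |A|^3$, with equality iff $a_1 + b_1 - a_2 \in A$ for all $a_1, a_2, b_1 \in A$, i.e., $A + A - A \subseteq A$. Setting $H := A - a_0$ as before, subtracting $a_0$ three times shows $H + H - H \subseteq H$; specializing one argument to $0 \in H$ yields $H - H \subseteq H$, and (by finiteness again, or equivalently because closure under subtraction already makes $H$ a group) $H$ is a subgroup.

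Finally, I would verify the converse to confirm the ``precisely when'' clause: if $A = g + H$ for a finite subgroup $H \leq G$, then $A + A = 2g + H$ and $A - A = H$, each of size $|H| = |A|$, while every triple $(a_1, a_2, b_1) \in A^3$ yields $a_1 + b_1 - a_2 \in 2g + H - g = g + H = A$, so $E(A,A) = |A|^3$. I do not anticipate a real obstacle here; the only point requiring care is the passage from ``closed under addition and contains $0$'' to ``subgroup,'' which uses only finiteness of $H$ (any element generates a finite cyclic subsemigroup containing $0$, hence inverses).
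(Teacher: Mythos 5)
Your proposal is correct and takes essentially the same approach as the paper: the inequalities come from fixing one element and comparing to a translate (for (i), (ii)) or a three-coordinate count (for (iii)), and the equality characterization reduces to showing $H=A-a_0$ is a subgroup. The only difference is that you carry out the closure arguments that the paper dismisses as ``a simple exercise,'' and you explicitly check the converse; both are fine.
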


\begin{proof}
For (i), pick $a\in A$.
Then $|A+A|\geq|A+a|=|A|$.
Considering
\begin{equation*}
A+A=\bigcup_{a\in A}(A+a),
\end{equation*}
we have equality in (i) precisely when $A+A=A+a$ for every $a\in A$.
Equivalently, given $a_0\in A$, then for every $a\in A$, addition by $a-a_0$ permutes the members of $A+a_0$.
This is further equivalent to the following:
Given $a_0\in A$, then for every $a\in A$, addition by $a-a_0$ permutes the members of $A-a_0$.
It certainly suffices for $H:=A-a_0$ to be a group, and it is a simple exercise to verify that this is also necessary.

The proof for (ii) is similar.

For (iii), we note that
\begin{equation*}
E(A,A)=\#\big\{(a,b,c)\in A^3:a+b-c\in A\big\}\leq|A|^3,
\end{equation*}
with equality precisely when $A$ has the property that $a+b-c\in A$ for every $a,b,c\in A$.
Again, it clearly suffices for $A-a_0$ to be a group, and necessity is a simple exercise.
\qed
\end{proof}

The notion of additive structure is somewhat intuitive.
You should think of a translate of a subgroup as having maximal additive structure.
When the bounds (i), (ii) and (iii) are close to being achieved by $A$ (e.g., $A$ is an arithmetic progression), you should think of $A$ as having a lot of additive structure.
Interestingly, while there are different measures of additive structure (e.g., $|A-A|$ and $E(A,A)$), they often exhibit certain relationships (perhaps not surprisingly).
The following is an example of such a relationship which is used throughout the paper by Bourgain et al.~\cite{BourgainDFKK:11}:

\begin{lemma}[Corollary 1 in~\cite{BourgainDFKK:11}]
\label{corollary.1}
If $E(A,A)\geq|A|^3/K$, then there exists a set $A'\subseteq A$ such that $|A'|\geq|A|/(20K)$ and $|A'-A'|\leq10^7K^9|A|$.
\end{lemma}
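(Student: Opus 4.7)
The statement is a quantitatively explicit version of the Balog-Szemerédi-Gowers (BSG) theorem, and the plan is to derive it by running the standard BSG pipeline while carefully tracking constants. Let $r(x):=\#\{(a,b)\in A\times A:a-b=x\}$, so that $\sum_x r(x)=|A|^2$ and $E(A,A)=\sum_x r(x)^2\geq|A|^3/K$ by hypothesis.

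First, I would isolate the popular difference set $P:=\{x:r(x)\geq\alpha|A|/K\}$ for an absolute constant $\alpha>0$ to be chosen at the end. A one-line threshold argument shows that non-popular differences contribute at most $\alpha|A|^3/K$ to the energy, so popular differences account for the rest; hence the ``popular difference graph'' $G$ on vertex set $A$ with $a\sim b$ iff $a-b\in P$ has edge count $|E(G)|=\sum_{x\in P}r(x)\geq(1-\alpha)|A|^2/K$ (using $r(x)\leq|A|$). At the same time, the popularity threshold together with $\sum_x r(x)=|A|^2$ forces $|P|\leq K|A|/\alpha$.

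Next, I would apply the graph-theoretic core of BSG (the lemma of Sudakov-Szemerédi-Vu, or the version appearing just before this corollary in \cite{BourgainDFKK:11}): from $G$ one extracts a vertex subset $A'\subseteq A$ with $|A'|\geq|A|/(20K)$ such that every pair $(a,a')\in A'\times A'$ is joined by at least $\gamma|A|/K^{c}$ 2-paths, meaning elements $v\in A$ with both $a-v\in P$ and $v-a'\in P$, for some explicit $\gamma>0$ and small integer $c$. (A routine clean-up, discarding vertices of atypically low degree, promotes the more common ``most pairs'' form of the lemma into an ``every pair'' statement at the cost of only a constant factor in $|A'|$.) Writing $a-a'=(a-v)-(a'-v)$ exhibits every element of $A'-A'$ with at least $\gamma|A|/K^{c}$ distinct representations as a difference of two elements of $P$, whence
\begin{equation*}
\frac{\gamma|A|}{K^{c}}\,|A'-A'|\leq|P|^2\leq\frac{K^2|A|^2}{\alpha^2},
\end{equation*}
which rearranges to $|A'-A'|\leq(\alpha^2\gamma)^{-1}K^{c+2}|A|$.

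The main obstacle, and really the only work beyond invoking classical BSG, is pinning down $c$ and the accumulated multiplicative constants so that the bounds $|A'|\geq|A|/(20K)$ and $|A'-A'|\leq 10^7K^9|A|$ come out on the nose. I would optimize the popularity threshold $\alpha$, the ``bad pair'' tolerance inside the graph-theoretic lemma, and the degree cut-off used in the clean-up step, aiming to realize $c\leq 7$ so that the exponent lands at $9$; the numerical constant $10^7$ should then follow from a direct product of the constants arising at each of the preceding steps. The qualitative picture is entirely classical, so the proof is a careful exercise in constant-tracking rather than a fundamentally new argument.
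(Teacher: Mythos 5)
Your high-level pipeline --- popular differences, a quantitative BSG graph lemma, then a representation count --- is the same route the paper points to (Lemma~6 of \cite{BourgainDFKK:11}, which in turn follows from Lemma~2.2 of \cite{BourgainG:09}); this chapter does not reprove the lemma, it cites it. But your sketch has a real gap at its central step. You want an ``every pair in $A'$ has $\gtrsim|A|/K^{c}$ paths of length~$2$'' graph lemma, and you claim to obtain it from a ``most pairs'' form by ``discarding vertices of atypically low degree.'' That cleanup does not work: being a bad pair (few common neighbours) is a property of the \emph{pair}, not of either vertex, so after deleting every vertex that participates in many bad pairs, two surviving vertices can still be bad with each other --- each has few bad partners overall, but nothing prevents that particular pair from being one of them. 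Vertex degree is not even the right quantity to threshold on; the relevant quantity is the number of bad partners, and even that threshold only yields a ``most pairs'' conclusion. Gowers' argument sidesteps this by passing to paths of length~$3$ in a bipartite graph, and Bourgain--Garaev use a different refinement of the popular neighbourhood; in neither case is the step a routine degree cutoff.

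This matters quantitatively as well as logically. If you are forced to paths of length~$3$, then $a-a'$ is a signed sum of \emph{three} elements of $P$, so the endgame becomes $|A'-A'|\le|P|^{3}/(\text{min.\ paths})$ rather than $|P|^{2}/(\text{min.\ paths})$, and your arithmetic $c+2=9$ no longer applies --- the exponent on $K$ and the leading constant both have to be re-derived from the (now different) path-count lower bound. Until the graph-theoretic core lemma is actually stated and proved with explicit exponents and constants, the claim that one can ``realize $c\le 7$'' and land on $10^{7}K^{9}$ is an aspiration, not a derivation; the proposal is a plan for a proof rather than a proof.
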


In words, a set with a lot of additive energy necessarily has a large subset with a small difference set.
This is proved using a version of the Balog--Szemeredi--Gowers lemma~\cite{BourgainG:09}.

If translates of subgroups have maximal additive structure, then which sets have minimal additive structure?
It turns out that random sets tend to (nearly) have this property, and one way to detect low additive structure is Fourier bias:
\begin{equation*}
\|A\|_u:=\max_{\substack{\theta\in G\\\theta\neq0}}|\widehat{1_A}(\theta)|,
\end{equation*}
where the Fourier transform ($\hat{\cdot}$) used here has a $1/|G|$ factor in front (it is not unitary).
For example, if $G=\mathbb{Z}/n\mathbb{Z}$, we take
\begin{equation*}
\hat{f}(\xi)
:=\frac{1}{|G|}\sum_{x\in G}f(x)e^{-2\pi ix\xi/n}.
\end{equation*}
Interestingly, $\|A\|_u$ captures how far $E(A,A)$ is from its minimal value $|A|^4/|G|$:

\begin{lemma}
\label{lemma.fourier bias vs energy}
For any subset $A$ of a finite additive group $G$, we have
\begin{description}[(ii)]
\item[(i)] $E(A,A)\geq\frac{|A|^4}{|G|}$, and
\item[(ii)] $\|A\|_u^4\leq\frac{1}{|G|^3}\Big(E(A,A)-\frac{|A|^4}{|G|}\Big)\leq\frac{|A|}{|G|}\|A\|_u^2$.
\end{description}
\end{lemma}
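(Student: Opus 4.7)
The plan is to express the additive energy as the $\ell^2$ norm squared of the convolution $1_A * 1_A$, then apply Plancherel's theorem in the stated non-unitary normalization to rewrite everything in terms of $|\widehat{1_A}|^4$. The single identity that drives both parts is
\begin{equation*}
E(A,A) \;=\; \frac{|A|^4}{|G|} \;+\; |G|^3 \sum_{\xi\neq 0} |\widehat{1_A}(\xi)|^4,
\end{equation*}
obtained by isolating the $\xi=0$ Fourier coefficient $\widehat{1_A}(0)=|A|/|G|$.

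First I would write $r(x):=\#\{(a,b)\in A^2:a+b=x\} = (1_A * 1_A)(x)$ so that $E(A,A)=\sum_x r(x)^2$. Combining Plancherel $\sum_x|f(x)|^2=|G|\sum_\xi |\hat f(\xi)|^2$ with the convolution identity $\widehat{f*g}(\xi)=|G|\hat f(\xi)\hat g(\xi)$ (which must be verified once against the given normalization) yields $E(A,A)=|G|^3\sum_\xi |\widehat{1_A}(\xi)|^4$. Stripping the $\xi=0$ term, which equals $|G|^3(|A|/|G|)^4=|A|^4/|G|$, gives the displayed identity. Inequality (i) is then immediate, since the remaining sum is non-negative.

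For (ii), the left inequality $\|A\|_u^4 \leq \sum_{\xi\neq 0} |\widehat{1_A}(\xi)|^4$ is just the bound of a maximum by a sum of non-negative terms, combined with the rewritten form of $E(A,A)-|A|^4/|G|$ above. For the right inequality, I would pull $\|A\|_u^2$ out of the sum,
\begin{equation*}
\sum_{\xi\neq 0} |\widehat{1_A}(\xi)|^4 \;\leq\; \|A\|_u^2 \sum_{\xi\neq 0} |\widehat{1_A}(\xi)|^2,
\end{equation*}
and then bound the remaining $\ell^2$ sum by the full Parseval sum $\sum_\xi |\widehat{1_A}(\xi)|^2 = |A|/|G|$, which gives the factor $|A|/|G|$ in the statement.

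There is no substantive obstacle here; the only risk is arithmetic with the $1/|G|$ factors, since the Fourier transform in this chapter is non-unitary. I would therefore derive the convolution identity and the Plancherel identity once explicitly before assembling them, so that the powers of $|G|$ in the identity for $E(A,A)$ come out correctly and the separation of the $\xi=0$ term matches the two sides of (ii) exactly.
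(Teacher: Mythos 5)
Your proof is correct and rests on the same Parseval/Plancherel computation that the paper uses for (ii), so the approaches are essentially the same. The one small difference is in part (i): the paper proves it by a direct Cauchy--Schwarz on the representation function (and entirely independently of Fourier analysis), whereas you recover it as an immediate corollary of the single Fourier identity $E(A,A)=\frac{|A|^4}{|G|}+|G|^3\sum_{\xi\neq 0}|\widehat{1_A}(\xi)|^4$, which is a clean unification; the paper also works with the difference representation function $\lambda_x=\#\{(a,a')\in A^2:a-a'=x\}$ rather than $1_A*1_A$, but since $\|\lambda\|_2^2=E(A,A)=\|1_A*1_A\|_2^2$ and $|\widehat{\lambda}|=|G||\widehat{1_A}|^2=|\widehat{1_A*1_A}|$, this is purely cosmetic.
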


\begin{proof}
Define $\lambda_x:=\#\{(a,a')\in A^2:a-a'=x\}$. Then (i) follows from Cauchy--Schwarz:
\begin{equation*}
|A|^2=\sum_{x\in G}\lambda_x\leq|G|^{1/2}\|\lambda\|_2=\big(|G|E(A,A)\big)^{1/2}.
\end{equation*}
We will prove (ii) assuming $G=\mathbb{Z}/n\mathbb{Z}$, but the proof generalizes. Denote $e_n(x):=e^{2\pi ix/n}$. For the left-hand inequality, we consider
\begin{equation*}
\sum_{\theta\in G}|\widehat{1_A}(\theta)|^4=\sum_{\theta\in G}\bigg|\frac{1}{|G|}\sum_{a\in A}e_n(-\theta a)\bigg|^4=\frac{1}{|G|^4}\sum_{\theta\in G}\bigg|\sum_{x\in G}\lambda_xe_n(-\theta x)\bigg|^2,
\end{equation*}
where the last step is by expanding $|w|^2=w\overline{w}$. Then Parseval's identity simplifies this to $\frac{1}{|G|^3}\|\lambda\|_2^2=\frac{1}{|G|^3}E(A,A)$. We use this to bound $\|A\|_u^4$:
\begin{equation*}
\|A\|_u^4=\max_{\substack{\theta\in G\\\theta\neq0}}|\widehat{1_A}(\theta)|^4\leq\sum_{\substack{\theta\in G\\\theta\neq0}}|\widehat{1_A}(\theta)|^4=\frac{1}{|G|^3}E(A,A)-\frac{|A|^4}{|G|^4}.
\end{equation*}
For the right-hand inequality, we apply Parseval's identity:
\begin{equation*}
E(A,A)=\sum_{x\in G}\lambda_x^2=\frac{1}{|G|}\sum_{\theta\in G}\bigg|\sum_{x\in G}\lambda_xe_n(-\theta x)\bigg|^2=\frac{|A|^4}{|G|}+\frac{1}{|G|}\sum_{\substack{\theta\in G\\\theta\neq0}}\bigg|\sum_{x\in G}\lambda_xe_n(-\theta x)\bigg|^2
\end{equation*}
From here, we apply the expansion $|w|^2=w\overline{w}$
\begin{equation*}
\bigg|\sum_{a\in A}e_n(-\theta a)\bigg|^2=\sum_{x\in G}\lambda_xe_n(-\theta x)
\end{equation*}
to continue:
\begin{equation*}
\sum_{\substack{\theta\in G\\\theta\neq0}}\bigg|\sum_{x\in G}\lambda_xe_n(-\theta x)\bigg|^2=\sum_{\substack{\theta\in G\\\theta\neq0}}\bigg|\sum_{a\in A}e_n(-\theta a)\bigg|^4\leq\sum_{\substack{\theta\in G\\\theta\neq0}}\big(|G|\|A\|_u\big)^2\bigg|\sum_{a\in A}e_n(-\theta a)\bigg|^2.
\end{equation*}
Applying Parseval's identity then gives
\begin{equation*}
E(A,A)\leq\frac{|A|^4}{|G|}+\big(|G|\|A\|_u\big)^2\cdot\frac{1}{|G|}\sum_{\theta\in G}\bigg|\sum_{a\in A}e_n(-\theta a)\bigg|^2=\frac{|A|^4}{|G|}+|G|^2\|A\|_u^2|A|,
\end{equation*}
which is a rearrangement of the right-hand inequality.
\qed
\end{proof}

\section{The Matrix Construction}

This section combines ideas from the previous section to introduce the matrix construction used by Bourgain et al.~\cite{BourgainDFKK:11} to break the square-root bottleneck.
The main idea is to construct a Gram matrix $\Phi^*\Phi$ whose entries exhibit cancellations for weak flat RIP (see Definition~\ref{defn.weak flat RIP}).
By Lemma~\ref{lemma.fourier bias vs energy}, we can control cancellations of complex exponentials
\begin{equation*}
\bigg|\sum_{a\in A}e_n(\theta a)\bigg|\leq n\|A\|_u,\qquad\theta\neq0
\end{equation*}
in terms of the additive energy of the index set $A\subseteq\mathbb{Z}/n\mathbb{Z}$; recall that $e_n(x):=e^{2\pi i x/n}$.
This motivates us to pursue a Gram matrix whose entries are complex exponentials.
To this end, consider the following vector:
\begin{equation*}
u_{a,b}:=\frac{1}{\sqrt{p}}\Big(e_p(ax^2+bx)\Big)_{x\in\mathbb{F}_p},
\end{equation*}
where $p$ is prime and $\mathbb{F}_p$ denotes the field of size $p$. Such vectors are called \textit{chirps}, and they are used in a variety of applications including radar. Here, we are mostly interested in the form of their inner products. If $a_1=a_2$, then $\langle u_{a_1,b_1},u_{a_2,b_2}\rangle=\delta_{b_1,b_2}$ by the geometric sum formula. Otherwise, the inner product is more interesting:
\begin{equation*}
\langle u_{a_1,b_1},u_{a_2,b_2}\rangle=\frac{1}{p}\sum_{x\in\mathbb{F}_p}e_p\Big((a_1-a_2)x^2+(b_1-b_2)x\Big).
\end{equation*}
Since $a_1-a_2\neq0$, we can complete the square in the exponent, and changing variables to $y:=x+(b_1-b_2)/(2(a_1-a_2))$ gives
\begin{equation*}
\langle u_{a_1,b_1},u_{a_2,b_2}\rangle=\frac{1}{p}e_p\bigg(-\frac{(b_1-b_2)^2}{4(a_1-a_2)}\bigg)\sum_{y\in\mathbb{F}_p}e_p\Big((a_1-a_2)y^2\Big).
\end{equation*}
Finally, this can be simplified using a quadratic Gauss sum formula:
\begin{equation*}
\langle u_{a_1,b_1},u_{a_2,b_2}\rangle=\frac{\sigma_p}{\sqrt{p}}\bigg(\frac{a_1-a_2}{p}\bigg)e_p\bigg(-\frac{(b_1-b_2)^2}{4(a_1-a_2)}\bigg),
\end{equation*}
where $\sigma_p$ is $1$ or $i$ (depending on whether $p$ is $1$ or $3\bmod 4$) and $(\frac{a_1-a_2}{p})$ is a Legendre symbol, taking value $\pm1$ depending on whether $a_1-a_2$ is a perfect square $\bmod p$. Modulo these factors, the above inner product is a complex exponential, and since we want these in our Gram matrix $\Phi^*\Phi$, we will take $\Phi$ to have columns of the form $u_{a,b}$---in fact, the columns will be $\{u_{a,b}\}_{(a,b)\in\mathcal{A}\times\mathcal{B}}$ for some well-designed sets $\mathcal{A},\mathcal{B}\subseteq\mathbb{F}_p$.

For weak flat RIP, we want to bound the following quantity for every $\Omega_1,\Omega_2\subseteq\mathcal{A}\times\mathcal{B}$ with $|\Omega_1|,|\Omega_2|\leq\sqrt{p}$:
\begin{equation*}
\bigg|\bigg\langle\sum_{(a_1,b_1)\in\Omega_1}u_{a_1,b_1},\sum_{(a_2,b_2)\in\Omega_2}u_{a_2,b_2}\bigg\rangle\bigg|.
\end{equation*}
For $i=1,2$, define
\begin{equation*}
A_i:=\{a:\exists b\mbox{ s.t. }(a,b)\in\Omega_i\}\quad\mbox{and}\quad\Omega_i(a):=\{b:(a,b)\in\Omega_i\}.
\end{equation*}
These provide an alternate expression for the quantity of interest:
\begin{align*}
\bigg|\sum_{(a_1,b_1)\in\Omega_1}\sum_{(a_2,b_2)\in\Omega_2}\langle u_{a_1,b_1},u_{a_2,b_2}\rangle\bigg|
&=\bigg|\sum_{\substack{a_1\in A_1\\a_2\in A_2}}\sum_{\substack{b_1\in\Omega_1(a_1)\\b_2\in\Omega_2(a_2)}}\langle u_{a_1,b_1},u_{a_2,b_2}\rangle\bigg|,\\
&\leq\sum_{\substack{a_1\in A_1\\a_2\in A_2}}\bigg|\sum_{\substack{b_1\in\Omega_1(a_1)\\b_2\in\Omega_2(a_2)}}\langle u_{a_1,b_1},u_{a_2,b_2}\rangle\bigg|\\
&=\frac{1}{\sqrt{p}}\sum_{\substack{a_1\in A_1\\a_2\in A_2}}\bigg|\sum_{\substack{b_1\in\Omega_1(a_1)\\b_2\in\Omega_2(a_2)}}e_p\bigg(-\frac{(b_1-b_2)^2}{4(a_1-a_2)}\bigg)\bigg|.
\end{align*}
Pleasingly, it now suffices to bound a sum of complex exponentials, which we feel equipped to do using additive combinatorics. The following lemma does precisely this (it can be viewed as an analog of Lemma~\ref{lemma.fourier bias vs energy}).

\begin{lemma}[Lemma~9 in~\cite{BourgainDFKK:11}]
\label{lemma.9}
For every $\theta\in\mathbb{F}_p^*$ and $B_1,B_2\subseteq\mathbb{F}_p$, we have
\begin{equation*}
\bigg|\sum_{\substack{b_1\in B_1\\b_2\in B_2}}e_p\Big(\theta(b_1-b_2)^2\Big)\bigg|\leq|B_1|^{1/2}E(B_1,B_1)^{1/8}|B_2|^{1/2}E(B_2,B_2)^{1/8}p^{1/8}.
\end{equation*}
\end{lemma}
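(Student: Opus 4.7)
The plan is to apply Cauchy--Schwarz once to reduce to a linear exponential sum, and then apply Hölder with a carefully chosen pair of exponents so that both additive energies emerge from the same calculation.

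Write $S$ for the target sum and $T(b_1) := \sum_{b_2 \in B_2} e_p(\theta(b_1-b_2)^2)$. I would first apply Cauchy--Schwarz in $b_1$ to get $|S|^2 \leq |B_1|\sum_{b_1 \in B_1}|T(b_1)|^2$. Next I would expand $|T(b_1)|^2$, use the factorization $(b_1-b_2)^2-(b_1-b_2')^2 = (b_2-b_2')(b_2+b_2'-2b_1)$ to isolate the linear dependence on $b_1$, swap the order of summation, and group the $(b_2, b_2')$ pairs by their common difference $c := b_2 - b_2'$. Taking absolute values then yields
\[
\sum_{b_1 \in B_1} |T(b_1)|^2 \;\leq\; \sum_{c \in \mathbb{F}_p} r_2(c)\,|\widehat{B_1}(c)|,
\]
where $r_2(c) := |\{(b,b') \in B_2^2 : b-b' = c\}|$ is the representation function of $B_2 - B_2$ and $\widehat{B_1}(c) := \sum_{b_1 \in B_1} e_p(-2\theta c b_1)$ is a linear character sum on $B_1$.

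The crux is to apply Hölder with the specific exponents $(4/3,\,4)$ rather than the naive Cauchy--Schwarz. The exponent $4$ on $\widehat{B_1}$ is driven by the orthogonality identity
\[
\sum_{c \in \mathbb{F}_p} |\widehat{B_1}(c)|^4 \;=\; p\,E(B_1,B_1),
\]
obtained by expanding the fourth power into a quadruple sum over $B_1^4$ and invoking $\sum_{c \in \mathbb{F}_p} e_p(-2\theta c x) = p\cdot\mathbf{1}_{x \equiv 0}$ (valid since $2\theta \in \mathbb{F}_p^*$). The conjugate exponent $4/3$ then calls for an estimate on $\|r_2\|_{4/3}$, which I would obtain by log-convexity of $L^p$ norms interpolating between $\|r_2\|_1 = |B_2|^2$ and $\|r_2\|_2^2 = E(B_2, B_2)$ at parameter $1/2$, yielding $\|r_2\|_{4/3} \leq |B_2|\,E(B_2)^{1/4}$. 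Chaining these estimates produces $|S|^2 \leq |B_1||B_2|\,p^{1/4}\,E(B_1)^{1/4}\,E(B_2)^{1/4}$, and taking a square root finishes.

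The main obstacle is really this exponent choice. A Cauchy--Schwarz in place of Hölder would force one to invoke only Parseval, $\sum_c |\widehat{B_1}(c)|^2 = p|B_1|$, which forgets the additive structure of $B_1$ entirely and gives a bound weaker than the target whenever $|B_1||B_2| < p$. Passing to the fourth moment of $\widehat{B_1}$ is precisely what surfaces $E(B_1, B_1)$ in the right form, and the interpolation bound on $\|r_2\|_{4/3}$ is then a routine consequence of $E(B_2,B_2)$ being $\|r_2\|_2^2$.
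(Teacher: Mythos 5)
Your proof is correct, and it takes a genuinely different route than the paper after the first Cauchy--Schwarz. Both arguments begin identically: Cauchy--Schwarz in $b_1$ reduces matters to $\sum_{b_1\in B_1}|T(b_1)|^2$, and expanding via $(b_1-b_2)^2-(b_1-b_2')^2=(b_2'-b_2)(2b_1-b_2-b_2')$ isolates a linear character sum $\widehat{B_1}$ against the representation function $r_2$ of $B_2-B_2$. From there the paper iterates Cauchy--Schwarz twice more: once to pull out $|B_2|^2$, then (after passing to difference variables $x=b_1-b_1'$, $y=b_2-b_2'$) to the bilinear form $\sum_{x,y}\lambda_x\mu_y e_p(2\theta xy)$, which Parseval bounds by $\sqrt{p\,\|\lambda\|_2^2\|\mu\|_2^2}=\sqrt{p\,E(B_1,B_1)E(B_2,B_2)}$. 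In that route the two energies enter symmetrically through the $\ell^2$ norms of the two representation functions, and only the $L^2$ Parseval identity is needed. You instead take absolute values immediately to reach $\sum_c r_2(c)|\widehat{B_1}(c)|$ and apply a single H\"older at the exponents $(4/3,4)$: the energy $E(B_1,B_1)$ emerges from the $L^4$ orthogonality identity $\sum_c|\widehat{B_1}(c)|^4=p\,E(B_1,B_1)$ (valid since $2\theta\neq 0$), while $E(B_2,B_2)$ emerges from the $L^{4/3}$ norm of $r_2$ via log-convexity interpolating $\|r_2\|_1=|B_2|^2$ and $\|r_2\|_2^2=E(B_2,B_2)$. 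Both chains close to the identical final bound. Your version has the virtue of surfacing all the structure in a single application of H\"older and making explicit the asymmetric mechanisms by which $B_1$ and $B_2$ contribute; the paper's iterated Cauchy--Schwarz is more elementary in its toolkit and more manifestly symmetric in the two sets.
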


\begin{proof}
First, Cauchy--Schwarz gives
\begin{align*}
\bigg|\sum_{\substack{b_1\in B_1\\b_2\in B_2}}e_p\Big(\theta(b_1-b_2)^2\Big)\bigg|^2
&=\bigg|\sum_{b_1\in B_1}1\cdot\sum_{b_2\in B_2}e_p\Big(\theta(b_1-b_2)^2\Big)\bigg|^2\\
&\leq|B_1|\sum_{b_1\in B_1}\bigg|\sum_{b_2\in B_2}e_p\Big(\theta(b_1-b_2)^2\Big)\bigg|^2.
\end{align*}
Expanding $|w|^2=w\overline{w}$ and rearranging then gives an alternate expression for the right-hand side:
\begin{equation*}
|B_1|\sum_{b_2,b_2'\in B_2}e_p\Big(\theta(b_2^2-(b_2')^2)\Big)\overline{\sum_{b_1\in B_1}e_p\Big(\theta(2b_1(b_2-b_2'))\Big)}.
\end{equation*}
Applying Cauchy--Schwarz again, we then have
\begin{equation*}
\bigg|\sum_{\substack{b_1\in B_1\\b_2\in B_2}}e_p\Big(\theta(b_1-b_2)^2\Big)\bigg|^4\leq|B_1|^2|B_2|^2\sum_{b_2,b_2'\in B_2}\bigg|\sum_{b_1\in B_1}e_p\Big(\theta(2b_1(b_2-b_2'))\Big)\bigg|^2,
\end{equation*}
and expanding $|w|^2=w\overline{w}$ this time gives
\begin{equation*}
|B_1|^2|B_2|^2\sum_{\substack{b_1,b_1'\in B_1\\b_2,b_2'\in B_2}}e_p\Big(2\theta(b_1-b_1')(b_2-b_2')\Big).
\end{equation*}
At this point, it is convenient to change variables, namely, $x=b_1-b_1'$ and $y=b_2-b_2'$:
\begin{equation}
\label{eq.part of lemma 9}
\bigg|\sum_{\substack{b_1\in B_1\\b_2\in B_2}}e_p\Big(\theta(b_1-b_2)^2\Big)\bigg|^4\leq |B_1|^2|B_2|^2\sum_{x,y\in\mathbb{F}_p}\lambda_x\mu_ye_p(2\theta xy),
\end{equation}
where $\lambda_x:=\#\{(b_1,b_1')\in B_1^2:b_1-b_1'=x\}$ and similarly for $\mu_y$ in terms of $B_2$. We now apply Cauchy--Schwarz again to bound the sum in \eqref{eq.part of lemma 9}:
\begin{equation*}
\bigg|\sum_{x\in\mathbb{F}_p}\lambda_x\sum_{y\in\mathbb{F}_p}\mu_ye_p(2\theta xy)\bigg|^2\leq\|\lambda\|_2^2\sum_{x\in\mathbb{F}_p}\bigg|\sum_{y\in\mathbb{F}_p}\mu_ye_p(2\theta xy)\bigg|^2,
\end{equation*}
and changing variables $x':=-2\theta x$ (this change is invertible since $\theta\neq0$), we see that the right-hand side is a sum of squares of the Fourier coefficients of $\mu$. As such, Parseval's identity gives the following simplification:
\begin{equation*}
\bigg|\sum_{x,y\in\mathbb{F}_p}\lambda_x\mu_ye_p(2\theta xy)\bigg|^2\leq p\|\lambda\|_2^2\|\mu\|_2^2=pE(B_1,B_1)E(B_2,B_2).
\end{equation*}
Applying this bound to \eqref{eq.part of lemma 9} gives the result.
\qed
\end{proof}

\subsection{How to Construct $\mathcal{B}$}

Lemma~\ref{lemma.9} enables us to prove weak-flat-RIP-type cancellations in cases where $\Omega_1(a_1),\Omega_2(a_2)\subseteq\mathcal{B}$ both lack additive structure.
Indeed, the method of \cite{BourgainDFKK:11} is to do precisely this, and the remaining cases (where either $\Omega_1(a_1)$ or $\Omega_2(a_2)$ has more additive structure) will find cancellations by accounting for the dilation weights $1/(a_1-a_2)$.
Overall, we will be very close to proving that $\Phi$ is RIP if most subsets of $\mathcal{B}$ lack additive structure.
To this end, Bourgain et al.~\cite{BourgainDFKK:11} actually prove something much stronger:
They design $\mathcal{B}$ in such a way that all sufficiently large subsets have low additive structure.
The following theorem is the first step in the design:

\begin{theorem}[Theorem~5 in~\cite{BourgainDFKK:11}]
\label{theorem.5}
Fix $r,M\in\mathbb{N}$, $M\geq2$, and define the cube $\mathcal{C}:=\{0,\ldots,M-1\}^r\subseteq\mathbb{Z}^r$. Let $\tau$ denote the solution to the equation
\begin{equation*}
\Big(\frac{1}{M}\Big)^{2\tau}+\Big(\frac{M-1}{M}\Big)^{\tau}=1.
\end{equation*}
Then for any subsets $A,B\subseteq\mathcal{C}$, we have
\begin{equation*}
|A+B|\geq\big(|A||B|\big)^\tau.
\end{equation*}
\end{theorem}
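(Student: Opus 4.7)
I will induct on the dimension $r$. The base case $r=0$ is trivial, since the cube is a single point. For $r\geq 1$, slice $A$ and $B$ along the first coordinate: write $A=\bigsqcup_{i=0}^{M-1}\{i\}\times A_i$ and $B=\bigsqcup_{j=0}^{M-1}\{j\}\times B_j$ with $A_i,B_j\subseteq\{0,\ldots,M-1\}^{r-1}$. Since the first coordinate of an element of $A+B$ lies in $\{0,\ldots,2M-2\}$ and the $k$-th slice of $A+B$ contains $\bigcup_{i+j=k}(A_i+B_j)$, one has
\begin{equation*}
|A+B|\;\geq\;\sum_{k=0}^{2M-2}\max_{i+j=k}|A_i+B_j|\;\geq\;\sum_{k=0}^{2M-2}\max_{i+j=k}\bigl(|A_i||B_j|\bigr)^{\tau},
\end{equation*}
where the last step invokes the inductive hypothesis in dimension $r-1$ (the same $\tau$ applies, since it depends only on $M$).

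The task thus reduces to a weighted one-dimensional inequality: for any nonnegative sequences $\alpha,\beta\in\mathbb{R}_{\geq 0}^M$,
\begin{equation*}
\sum_{k=0}^{2M-2}\max_{\substack{i+j=k\\0\leq i,j\leq M-1}}(\alpha_i\beta_j)^{\tau}\;\geq\;\Bigl(\sum_{i=0}^{M-1}\alpha_i\Bigr)^{\tau}\Bigl(\sum_{j=0}^{M-1}\beta_j\Bigr)^{\tau}.
\end{equation*}
Applying this with $\alpha_i=|A_i|$ and $\beta_j=|B_j|$ closes the induction, since $\bigl(\sum_i|A_i|\bigr)\bigl(\sum_j|B_j|\bigr)=|A||B|$. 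By homogeneity one may normalize $\sum\alpha=\sum\beta=1$ and only show the left-hand side is at least $1$. Equality is saturated for Dirac pairs $\alpha=\delta_{i_0}$, $\beta=\delta_{j_0}$ (any indices), which suggests a proof by a rearrangement or compression argument that pushes $\alpha,\beta$ toward Diracs without increasing the left-hand side, or alternatively an induction on $|\operatorname{supp}\alpha|+|\operatorname{supp}\beta|$ anchored at the Dirac case.

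The main obstacle is precisely this weighted 1D inequality, because $\tau$ is calibrated to make it sharp and so the defining equation $(1/M)^{2\tau}+((M-1)/M)^{\tau}=1$ must enter nontrivially. A plausible mechanism is a two-case split in which $(1/M)^{2\tau}$ captures the ``corner'' slices $k=0$ and $k=2M-2$ (each containing a single pair $(i,j)$) while $((M-1)/M)^{\tau}$ handles an ``interior'' sub-problem that can be re-cast on $M-1$ slots and then combined with the corner contributions via the defining relation. A direct Lagrange-multiplier analysis on the simplex is complicated by the non-smoothness of $\max_{i+j=k}$, so the proof will need to exploit the precise extremal structure rather than generic convexity tools.
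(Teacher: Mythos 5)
The paper does not prove Theorem~\ref{theorem.5}; it only cites it from~\cite{BourgainDFKK:11}, so there is no in-paper proof to compare against.

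Your inductive reduction is sound: slicing along the first coordinate, the $k$-slice of $A+B$ contains $\bigcup_{i+j=k}(A_i+B_j)$ and the slices for distinct $k$ are disjoint, so $|A+B|\geq\sum_k\max_{i+j=k}|A_i+B_j|$, and the inductive hypothesis in dimension $r-1$ applies with the same $\tau$. This correctly reduces the theorem to the weighted one-dimensional inequality
\begin{equation*}
\sum_{k=0}^{2M-2}\max_{\substack{i+j=k\\0\leq i,j\leq M-1}}(\alpha_i\beta_j)^{\tau}\geq\Bigl(\sum_i\alpha_i\Bigr)^{\tau}\Bigl(\sum_j\beta_j\Bigr)^{\tau}.
\end{equation*}
But you then stop: you identify this 1D inequality as ``the main obstacle'' and only sketch candidate strategies (corner/interior splitting, compression toward Diracs, induction on support size) without carrying any of them through. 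None of these is an established step, and the defining relation $(1/M)^{2\tau}+((M-1)/M)^{\tau}=1$---which is the entire content of the theorem---never actually enters the argument. So the proof has a genuine gap: the reduction is fine, but the heart of the result, the sharp 1D weighted inequality that calibrates $\tau$, is asserted rather than proved. To close the gap you would need to actually prove that inequality (e.g., by verifying the Dirac extremals are in fact minimizers via a compression/two-point-move argument in which the constraint on $\tau$ emerges from the worst binary split), and as written that work is missing.
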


As a consequence of this theorem (taking $A=B$), we have $|A+A|\geq|A|^{2\tau}$ for every $A\subseteq\mathcal{C}$, and since $\tau>1/2$, this means that large subsets $A$ have $|A+A|\gg|A|$, indicating low additive structure.
However, $\mathcal{C}$ is a subset of the group $\mathbb{Z}^r$, whereas we need to construct a subset $\mathcal{B}$ of $\mathbb{F}_p$.
The trick here is to pick $\mathcal{B}$ so that it inherits the additive structure of $\mathcal{C}$, and we use a \textit{Freiman isomorphism} to accomplish this.
In particular, we want a mapping $\varphi\colon\mathcal{C}\rightarrow\mathbb{F}_p$ such that $c_1+c_2=c_3+c_4$ if and only if $\varphi(c_1)+\varphi(c_2)=\varphi(c_3)+\varphi(c_4)$, and we will take $\mathcal{B}:=\varphi(\mathcal{C})$---this is what it means for $\mathcal{C}$ and $\mathcal{B}$ to be Freiman isomorphic, and it's easy to see that Freiman isomorphic sets have the same sized sumsets, difference sets and additive energy.
In this case, it suffices to take
\begin{equation}
\label{eq.construction of b}
\mathcal{B}:=\bigg\{\sum_{j=1}^{r}x_j(2M)^{j-1}:x_1,\ldots,x_r\in\{0,\ldots,M-1\}\bigg\}.
\end{equation}
Indeed, the $2M$-ary expansion of $b_1,b_2\in\mathcal{B}$ reveals the $c_1,c_2\in\mathcal{C}$ such that $\varphi(c_1)=b_1$ and $\varphi(c_2)=b_2$.
Also, adding $b_1$ and $b_2$ incurs no carries, so the expansion of $b_1+b_2$ reveals $c_1+c_2$ (even when $c_1+c_2\not\in\mathcal{C}$).

We already know that large subsets of $\mathcal{C}$ (and $\mathcal{B}$) exhibit low additive structure, but the above theorem only gives this in terms of the sumset, whereas Lemma~\ref{lemma.9} requires low additive structure in terms of additive energy.
As such, we will first convert the above theorem into a statement about difference sets, and then apply Lemma~\ref{corollary.1} to further convert it in terms of additive energy:

\begin{corollary}[essentially Corollary~3 in~\cite{BourgainDFKK:11}]
Fix $r$, $M$ and $\tau$ according to Theorem~\ref{theorem.5}, take $\mathcal{B}$ as defined in \eqref{eq.construction of b}, and pick $s$ and $t$ such that $(2\tau-1)s\geq t$.
Then every subset $B\subseteq\mathcal{B}$ such that $|B|>p^s$ satisfies $|B-B|>p^t|B|$.
\end{corollary}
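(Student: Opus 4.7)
The plan is to transfer the problem to the cube $\mathcal{C}$ via the Freiman isomorphism, apply Theorem~\ref{theorem.5} after a reflection trick that turns the sumset bound into a difference set bound, and then use the size hypothesis $|B|>p^s$ to conclude.

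First I would reduce to $\mathcal{C}$. Since the map $\varphi\colon\mathcal{C}\to\mathcal{B}$ defined by $\varphi(c_1,\ldots,c_r)=\sum_{j=1}^r c_j(2M)^{j-1}$ is a Freiman 2-isomorphism, every $B\subseteq\mathcal{B}$ corresponds to some $B'\subseteq\mathcal{C}$ with $|B'|=|B|$, and the identity $b_1-b_2=b_3-b_4\Leftrightarrow b_1+b_4=b_2+b_3$ together with the 2-isomorphism property shows that $|B-B|=|B'-B'|$. Thus it suffices to prove $|B'-B'|\geq|B'|^{2\tau}$ for any $B'\subseteq\mathcal{C}$.

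The key step is the right application of Theorem~\ref{theorem.5}. Applying it to the pair $(B',B')$ only controls the sumset $B'+B'$, so instead I would apply it to the pair $(B',\,b_0-B')$, where $b_0:=(M-1,\ldots,M-1)$ is the ``far corner'' of $\mathcal{C}$. Since each coordinate of an element of $B'$ lies in $\{0,\ldots,M-1\}$, the reflected set $b_0-B'$ is again contained in $\mathcal{C}$, and Theorem~\ref{theorem.5} gives
\begin{equation*}
|B'+(b_0-B')|\geq\bigl(|B'|\,|b_0-B'|\bigr)^{\tau}=|B'|^{2\tau}.
\end{equation*}
Because $B'+(b_0-B')=b_0+(B'-B')$ is just a translate of the difference set, we conclude $|B'-B'|\geq|B'|^{2\tau}$, and hence $|B-B|\geq|B|^{2\tau}$.

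Finally, combining the displayed bound with $|B|>p^s$ and the fact that $\tau>1/2$ (which follows from the definition of $\tau$ in Theorem~\ref{theorem.5}),
\begin{equation*}
\frac{|B-B|}{|B|}\geq|B|^{2\tau-1}>p^{(2\tau-1)s}\geq p^{t},
\end{equation*}
as required. The only real obstacle is conceptual: Theorem~\ref{theorem.5} is stated only for subsets of $\mathcal{C}$, and a naive attempt to pass from $|B+B|\geq|B|^{2\tau}$ to a difference set bound via Ruzsa's triangle inequality $|B-B|\leq|B+B|^2/|B|$ runs in the wrong direction and yields only $|B-B|\geq|B|^{\tau+1/2}$, which would force the weaker hypothesis $(2\tau-1)s\geq 2t$. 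The reflection trick sidesteps this loss by exploiting that $\mathcal{C}$ is invariant under reflection through $b_0$.
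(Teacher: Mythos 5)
Your proof is correct and uses exactly the same reflection trick as the paper: apply Theorem~\ref{theorem.5} to the pair $(B,\,b_0-B)$ where $b_0$ is the far corner, observe that $B+(b_0-B)$ is a translate of $B-B$, and conclude $|B-B|\geq|B|^{2\tau}$. You are slightly more explicit than the paper in spelling out the transfer through the Freiman isomorphism (the paper applies Theorem~\ref{theorem.5} directly to subsets of $\mathcal{B}$, relying on the earlier discussion), and your aside about why the Ruzsa-type inequality $|B+B|\leq|B-B|^2/|B|$ loses a factor of two in the exponent is accurate, but the core argument is identical.
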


\begin{proof}
First note that $-B$ is a translate of some other set $B'\subseteq\mathcal{B}$.
Explicitly, if $b_0=\sum_{j=1}^r(M-1)(2M)^{j-1}$, then we can take $B':=b_0-B$.
As such, Theorem~\ref{theorem.5} gives
\begin{equation*}
|B-B|=|B+B'|\geq|B|^{2\tau}=|B|^{2\tau-1}|B|>p^{(2\tau-1)s}|B|\geq p^t|B|.
\quad
\qed
\end{equation*}
\end{proof}

\begin{corollary}[essentially Corollary~4 in~\cite{BourgainDFKK:11}]
\label{corollary.4}
Fix $r$, $M$ and $\tau$ according to Theorem~\ref{theorem.5}, take $\mathcal{B}$ as defined in \eqref{eq.construction of b}, and pick $\gamma$ and $\ell$ such that $(2\tau-1)(\ell-\gamma)\geq 10\gamma$.
Then for every $\epsilon>0$, there exists $P$ such that for every $p\geq P$, every subset $S\subseteq\mathcal{B}$ with $|S|>p^\ell$ satisfies $E(S,S)<p^{-\gamma+\epsilon}|S|^3$.
\end{corollary}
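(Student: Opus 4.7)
The plan is to argue by contrapositive: assume $E(S,S) \ge p^{-\gamma+\epsilon}|S|^3$ and derive a contradiction with the previous corollary for all sufficiently large $p$. The strategy chains together Lemma~\ref{corollary.1} (high additive energy forces a large subset with a small difference set) with the preceding difference-set corollary (large subsets of $\mathcal{B}$ necessarily have big difference sets). The single binding hypothesis $(2\tau-1)(\ell-\gamma) \ge 10\gamma$ should be exactly what makes these two inequalities incompatible.

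Concretely, I would set $K := p^{\gamma-\epsilon}$, so the hypothesis reads $E(S,S) \ge |S|^3/K$. Lemma~\ref{corollary.1} then supplies a set $S' \subseteq S \subseteq \mathcal{B}$ with
\begin{equation*}
|S'| \ge \tfrac{|S|}{20K} > \tfrac{1}{20}\,p^{\ell-\gamma+\epsilon}
\qquad\text{and}\qquad
|S'-S'| \le 10^7 K^9 |S| = 10^7 p^{9(\gamma-\epsilon)}|S|.
\end{equation*}
For $p \ge P$ with $P$ large enough, the factor $1/20$ is absorbed so that $|S'| > p^s$ with $s := \ell-\gamma+\epsilon/2$. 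Since $S' \subseteq \mathcal{B}$, the previous corollary (applied with this $s$ and with $t := (2\tau-1)s$) forces $|S'-S'| > p^t |S'|$.

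Combining the upper and lower bounds on $|S'-S'|$ and using $|S'| \ge |S|/(20K)$ on the lower side gives
\begin{equation*}
p^{t} \cdot \tfrac{|S|}{20\,p^{\gamma-\epsilon}} < |S'-S'| \le 10^7 p^{9(\gamma-\epsilon)} |S|,
\end{equation*}
which, after cancelling $|S|$ and rearranging, becomes $p^{\,t - 10\gamma + 10\epsilon} < 2\cdot 10^{8}$. Now compute the exponent: $t = (2\tau-1)(\ell-\gamma) + (2\tau-1)\epsilon/2 \ge 10\gamma + (2\tau-1)\epsilon/2$ by the standing hypothesis, so
\begin{equation*}
t - 10\gamma + 10\epsilon \;\ge\; \tfrac{2\tau-1}{2}\,\epsilon + 10\epsilon \;>\; 0.
\end{equation*}
Thus the left side grows like a positive power of $p$, while the right side is an absolute constant; choosing $P$ so that $p^{(2\tau-1)\epsilon/2 + 10\epsilon} > 2 \cdot 10^8$ for all $p \ge P$ produces the desired contradiction.

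The main obstacle is not conceptual but bookkeeping: one must thread the $\epsilon$ carefully so that (a) the $1/20$ prefactor in the bound on $|S'|$ is absorbed into a slightly weaker exponent $s = \ell - \gamma + \epsilon/2$ rather than $\ell - \gamma + \epsilon$, leaving a strictly positive residual $\epsilon$ to dominate the constant $2\cdot 10^8$, and (b) the hypothesis $(2\tau-1)(\ell-\gamma)\ge 10\gamma$ is invoked with precisely enough slack that the $9(\gamma-\epsilon)$ term from Lemma~\ref{corollary.1} lines up with the $10\gamma$ budget after folding in the extra $\gamma-\epsilon$ loss from $|S'| \ge |S|/(20K)$. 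Once these accounting details are arranged, the contradiction is immediate for $p$ beyond a threshold $P = P(\epsilon, r, M)$.
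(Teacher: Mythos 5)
Your proof is correct and follows essentially the same route as the paper's: suppose $E(S,S)\ge p^{-\gamma+\epsilon}|S|^3$, apply Lemma~\ref{corollary.1} with $K=p^{\gamma-\epsilon}$ to extract a subset $S'$ that is too large and has too small a difference set, and contradict the preceding corollary. The only difference is bookkeeping (the paper folds the $\epsilon$ into showing $|B|>p^{\ell-\gamma}$ and $|B-B|<p^{10\gamma}|B|$ outright, taking $s=\ell-\gamma$, $t=10\gamma$, while you carry $s=\ell-\gamma+\epsilon/2$ and cancel $|S|$ at the end), but the argument is the same.
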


\begin{proof}
Suppose to the contrary that there exists $\epsilon>0$ such that there are arbitrarily large $p$ for which there is a subset $S\subseteq\mathcal{B}$ with $|S|>p^\ell$ and $E(S,S)\geq p^{-\gamma+\epsilon}|S|^3$.
Writing $E(S,S)=|S|^3/K$, then $K\leq p^{\gamma-\epsilon}$. By Lemma~\ref{corollary.1}, there exists $B\subseteq S$ such that, for sufficiently large $p$,
\begin{equation*}
|B|\geq|S|/(20K)>\frac{1}{20}p^{\ell-\gamma+\epsilon}>p^{\ell-\gamma},
\end{equation*}
and
\begin{equation*}
|B-B|\leq10^7K^9|S|\leq10^7K^9(20K|B|)\leq10^7\cdot20\cdot p^{10(\gamma-\epsilon)}|B|<p^{10\gamma}|B|.
\end{equation*}
However, this contradicts the previous corollary with $s=\ell-\gamma$ and $t=10\gamma$.
\qed
\end{proof}

Notice that we can weaken our requirements on $\gamma$ and $\ell$ if we had a version of Lemma~\ref{corollary.1} with a smaller exponent on $K$.
This exponent comes from a version of the Balog--Szemeredi--Gowers lemma (Lemma~6 in~\cite{BourgainDFKK:11}), which follows from the proof of Lemma~2.2 in~\cite{BourgainG:09}.
(Specifically, take $A=B$, and you need to change $A-_EB$ to $A+_EB$, but this change doesn't affect the proof.)
Bourgain et al. indicate that it would be desirable to prove a better version of this lemma, but it is unclear how easy that would be.

\subsection{How to Construct $\mathcal{A}$}

The previous subsection showed how to construct $\mathcal{B}$ so as to ensure that all sufficiently large subsets have low additive structure.
By Lemma~\ref{lemma.9}, this in turn ensures that $\Phi$ exhibits weak-flat-RIP-type cancellations for most $\Omega_1(a_1),\Omega_2(a_2)\subseteq\mathcal{B}$.
For the remaining cases, $\Phi$ must exhibit weak-flat-RIP-type cancellations by somehow leveraging properties of $\mathcal{A}$.

The next section gives the main result, which requires a subset $\mathcal{A}=\mathcal{A}(p)$ of $\mathbb{F}_p$ for which there exists an even number $m$ as well as an $\alpha>0$ (both independent of $p$) such that the following two properties hold:
\begin{description}[(ii)]
\item[(i)]
$\Omega(p^\alpha)\leq|\mathcal{A}(p)|\leq p^\alpha$.
\item[(ii)]
For each $a\in\mathcal{A}$, then $a_1,\ldots,a_{2m}\in\mathcal{A}\setminus\{a\}$ satisfies
\begin{equation}
\label{eq.2.4}
\sum_{j=1}^m\frac{1}{a-a_j}=\sum_{j=m+1}^{2m}\frac{1}{a-a_j}
\end{equation}
only if $(a_1,\ldots,a_m)$ and $(a_{m+1},\ldots,a_{2m})$ are permutations of each other. Here, division (and addition) is taken in the field $\mathbb{F}_p$.
\end{description}

Unfortunately, these requirements on $\mathcal{A}$ lead to very little intuition compared to our current understanding of $\mathcal{B}$.
Regardless, we will continue by considering how Bourgain et al. constructs $\mathcal{A}$.
The following lemma describes their construction and makes a slight improvement to the value of $\alpha$ chosen in~\cite{BourgainDFKK:11}:

\begin{lemma}
\label{lemma.construction of a}
Pick $\epsilon>0$ and take $L:=\lfloor p^{1/2m(4m-1)}\rfloor$ and $U:=\lfloor L^{4m-1}\rfloor$. Then
\begin{equation*}
\mathcal{A}:=\{x^2+Ux:1\leq x\leq L\}
\end{equation*}
satisfies (i) and (ii) above if we take
\begin{equation*}
\alpha=\frac{1}{2m(4m-1)}
\end{equation*}
and $p$ is a sufficiently large prime.
\end{lemma}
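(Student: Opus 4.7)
The plan is to verify (i) directly by an injectivity argument and to establish (ii) by clearing denominators in \eqref{eq.2.4}, viewing the result as a polynomial identity in $U$, and exploiting a size/root-bound dichotomy to force multiset equality. For (i), the map $\phi\colon\{1,\ldots,L\}\to\mathbb{F}_p$, $\phi(x)=x^2+Ux$, is injective for $p$ large: if $\phi(x)=\phi(y)$ then $(x-y)(x+y+U)\equiv 0\pmod p$, and since $2L+U\leq 3L^{4m-1}$ while $p\geq L^{2m(4m-1)}$, the second factor is nonzero mod $p$ for $p$ large, forcing $x=y$. Hence $|\mathcal{A}|=L\in[p^\alpha-1,p^\alpha]$ with $\alpha=1/(2m(4m-1))$.

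For (ii), suppose $a,a_1,\ldots,a_{2m}\in\mathcal{A}\setminus\{a\}$ satisfy \eqref{eq.2.4}, write $a=x(x+U)$ and $a_j=x_j(x_j+U)$ with unique $x,x_j\in\{1,\ldots,L\}$, $x_j\neq x$, and note $a-a_j=(x-x_j)(x+x_j+U)$. Clearing denominators in \eqref{eq.2.4} by the quantity $\prod_j(x-x_j)(x+x_j+U)$ (nonzero mod $p$ for $p$ large) turns it into $N(U)\equiv 0\pmod p$, where
\[
N(U):=\sum_{j\in I}\prod_{k\neq j}(x-x_k)(x+x_k+U)\;-\;\sum_{j\in I'}\prod_{k\neq j}(x-x_k)(x+x_k+U)\in\mathbb{Z}[U],
\]
with $I=\{1,\ldots,m\},\ I'=\{m+1,\ldots,2m\}$, and the inner products ranging over $k\in I\cup I'$ with $k\neq j$. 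Viewing $N$ as a polynomial in the formal variable $U$ with $x,x_j$ fixed integers, a residue analysis of the rational function $\sum_I 1/((x-x_j)(x+x_j+U))-\sum_{I'}1/((x-x_j)(x+x_j+U))$ shows that $N$ is the zero polynomial exactly when the multisets $\{x_j\}_{j\in I}$ and $\{x_j\}_{j\in I'}$ coincide: its simple pole at $U=-(x+y)$ has residue $(n_I(y)-n_{I'}(y))/(x-y)$, where $n_S(y)$ counts the occurrences of $y$ in $\{x_j\}_{j\in S}$, and these residues must all vanish.

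Now assume for contradiction the multisets differ, so $N(U)$ is a nonzero integer polynomial of degree $d\leq 2m-1$. Expanding each summand using elementary symmetric polynomials in the $(x+x_k)$'s yields coefficient bounds $|[U^k]N|\leq C_m L^{4m-2-k}$. Two estimates close the argument at $U=\lfloor L^{4m-1}\rfloor$: (\emph{size}) $|N(U)|\leq C_m L^{8m^2-4m}$, which is less than $p\geq L^{8m^2-2m}$ for $p$ large, giving an $L^{2m}$ cushion; and (\emph{roots}) Cauchy's bound applied to $N$ shows every complex root $U_0$ satisfies $|U_0|\leq 1+C_m L^{4m-2}$, which is strictly less than $U\sim L^{4m-1}$ for $L$ large (the leading coefficient is a nonzero integer, hence has magnitude at least one). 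Together these imply $N(U)$ is a nonzero integer of absolute value below $p$, contradicting $N(U)\equiv 0\pmod p$; the multisets must therefore coincide.

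The main obstacle is calibrating $L$ and $U$ so that the two estimates hold simultaneously. The choice $L\sim p^{1/(2m(4m-1))}$ and $U\sim L^{4m-1}$ is precisely what does this: the size bound uses the $L^{2m}$ gap between $L^{8m^2-4m}$ and $p\geq L^{8m^2-2m}$, and the root bound uses the factor-$L$ gap between $U\sim L^{4m-1}$ and the Cauchy threshold $\sim L^{4m-2}$. Shrinking the exponent defining $L$ would violate the size bound; enlarging it would violate the root bound. Thus $\alpha=1/(2m(4m-1))$ is the largest value for which both survive, which is exactly the improvement over \cite{BourgainDFKK:11} claimed in the lemma.
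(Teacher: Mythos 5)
Your proof is correct, and the central nonvanishing step takes a genuinely different route from the paper's. Both arguments agree on the skeleton: clear denominators in \eqref{eq.2.4} to get an integer $N(U)=D_1D_2V$ (with $D_1=\prod_j(x-x_j)$, $D_2=\prod_j(x+x_j+U)$), check that $p\nmid D_1D_2$ so the clearing is legitimate, and then show $N(U)$ is a nonzero integer of magnitude $<p$, contradicting $N(U)\equiv 0\pmod p$. The size bound $|N(U)|\ll L^{8m^2-4m}<p$ plays the same role in both. Where you diverge is in proving $N(U)\neq 0$: the paper first absorbs repeated $a_j$'s into integer weights $\lambda_j$ (so the $x_j$ become distinct), then reduces $D_1D_2V$ modulo the \emph{integer} $x+x_1+U$ to the small nonzero residue $V_1=\lambda_1\prod_{j\geq 2}(x-x_j)(x_j-x_1)$, whose magnitude is below $U$ — hence $x+x_1+U\nmid D_1D_2V$. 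You instead keep the multisets intact, show via a partial-fraction/residue argument that $N\in\mathbb{Z}[U]$ vanishes identically precisely when the multisets $\{x_j\}_I$ and $\{x_j\}_{I'}$ coincide, and then — when $N\not\equiv 0$ — invoke Cauchy's root bound (valid because the leading coefficient is a nonzero \emph{integer}, hence $\geq 1$ in absolute value) to show $U\sim L^{4m-1}$ exceeds every root of $N$, which sit at $O(L^{4m-2})$. This is cleaner conceptually and makes the role of the $L^{4m-1}$ vs.\ $L^{4m-2}$ gap more transparent, at the price of needing the integrality of the leading coefficient to be noticed; the paper's modular reduction is more hands-on but avoids appealing to root bounds.

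One small overstatement in your closing paragraph: you claim $\alpha=1/(2m(4m-1))$ is the \emph{largest} exponent compatible with both estimates. The paper's own proof sets $U:=\lfloor L^{4m-2+\epsilon}\rfloor$, $L:=\lfloor p^{1/((4m-2+\epsilon)(2m-1+\epsilon))}\rfloor$ and then takes $\epsilon=1$ \textquotedblleft for simplicity,\textquotedblright\ so letting $\epsilon\to 0$ would give the strictly larger $\alpha\to 1/((4m-2)(2m-1))$; your own root/size cushions (a factor of $L$ and a factor of $L^{2m}$ respectively) leave the same slack. This does not affect the correctness of your proof of the stated lemma, only the optimality remark.

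Also note the $\epsilon>0$ in the lemma's statement is vestigial (it never enters the stated $L$, $U$, $\alpha$); you correctly ignore it.
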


\begin{proof}
One may quickly verify (i).
For (ii), we claim it suffices to show that for any $n\in\{1,\ldots,2m\}$, any distinct $x,x_1,\ldots,x_n\in\{1,\ldots,L\}$, and any nonzero integers $\lambda_1,\ldots,\lambda_n$ such that $|\lambda_1|+\cdots+|\lambda_n|\leq 2m$, then
\begin{equation}
\label{eq.A.1}
V=\sum_{j=1}^n\frac{\lambda_j}{(x-x_j)(x+x_j+U)}
\end{equation}
is nonzero (in $\mathbb{F}_p$).
To see this, define $a:=x^2+Ux$ and $a_j:=x_j^2+Ux_j$. Then
\begin{equation*}
V=\sum_{j=1}^n\frac{\lambda_j}{a-a_j}
\end{equation*}
As such, if (ii) fails to hold, then subtracting the right-hand side of \eqref{eq.2.4} from the left-hand side produces an example of $V$ which is zero, violating our statement.
Thus, our statement implies (ii) by the contrapositive.

We now seek to prove our statement.
To this end, define $D_1:=\prod_{j=1}^n(x-x_j)$ and $D_2:=\prod_{j=1}^n(x+x_j+U)$.
Note that \eqref{eq.A.1} being nonzero in $\mathbb{F}_p$ is equivalent to having $p$ not divide $D_1D_2V$ as an integer.
To prove this, we will show that
\begin{description}[(b)]
\item[(a)]
$p$ does not divide $D_1$,
\item[(b)]
$D_2V$ is nonzero, and
\item[(c)]
$|D_2V|<p$.
\end{description}
Indeed, (b) and (c) together imply that $p$ does not divide $D_2V$, which combined with (a) implies that $p$ does not divide $D_1D_2V$.

For (a), we have $D_1\neq0$ since $x$ and the $x_j$'s are distinct by assumption, and since $x$ and each $x_j$ has size at most $L$, we also have $|D_1|\leq L^{2m}$.
To complete the proof of (a), it then suffices to have
\begin{equation}
\label{eq.a'}
L^{2m}<p,
\end{equation}
which we will verify later.

For (b), we will prove that $D_1D_2V$ is nonzero. We first write
\begin{equation*}
D_1D_2V=\sum_{j=1}^n\frac{\lambda_jD_1}{x-x_j}\frac{D_2}{x+x_j+U}.
\end{equation*}
For each term in the above sum, note that both fractions are integers, and for every $j\neq1$, $x+x_1+U$ is a factor of $D_2/(x+x_j+U)$. As such, the entire sum is congruent to the first term modulo $x+x_1+U$:
\begin{equation*}
D_1D_2V\equiv\lambda_1\prod_{j=2}^n(x-x_j)\prod_{j=2}^n(x+x_j+U)\mod(x+x_1+U).
\end{equation*}
Each factor of the form $x+x_j+U$ can be further simplified:
\begin{equation*}
x+x_j+U=(x_j-x_1)+(x+x_1+U)\equiv x_j-x_1\mod(x+x_1+U),
\end{equation*}
and so
\begin{equation*}
D_1D_2V\equiv V_1\mod(x+x_1+U),
\end{equation*}
where
\begin{equation*}
V_1:=\lambda_1\prod_{j=2}^n(x-x_j)\prod_{j=2}^n(x_j-x_1).
\end{equation*}
To prove that $D_1D_2V$ is nonzero, it suffices to show that $x+x_1+U$ does not divide $V_1$. To this end, we first note that $V_1$ is nonzero since $x$ and the $x_j$'s are distinct by assumption. Next, since $|\lambda_1|\leq 2m$ and $x$ and each $x_j$ is at most $L$, we have $|V_1|\leq 2mL^{2n-2}\leq 2mL^{4m-2}$, and so it suffices to have
\begin{equation}
\label{eq.b'}
2mL^{4m-2}\leq U
\end{equation}
since $U<x+x_1+U$. We will verify this later.

For (c), we have
\begin{equation*}
|D_2V|\leq\sum_{j=1}^n\frac{|\lambda_j|}{|x-x_j|}\prod_{\substack{j'=1\\j'\neq j}}^n|x+x_{j'}+U|\leq\bigg(\sum_{j=1}^n|\lambda_j|\bigg)\cdot(2L+U)^{n-1}.
\end{equation*}
Considering our assumption on the $\lambda_j$'s we then have $|D_2V|\leq 2m(2L+U)^{n-1}\leq2m(2L+U)^{2m-1}$, and so it suffices to have
\begin{equation}
\label{eq.c'}
2m(2L+U)^{2m-1}<p.
\end{equation}

Overall, we have shown it suffices to have \eqref{eq.a'}, \eqref{eq.b'} and \eqref{eq.c'}.
To satisfy \eqref{eq.b'} for sufficiently large $L$, we take $U:=\lfloor L^{4m-2+\epsilon}\rfloor$.
Then $L=o(U)$, and so $2m(2L+U)^{2m-1}<U^{2m-1+\epsilon}\leq L^{(4m-2+\epsilon)(2m-1+\epsilon)}$ for sufficiently large $U$.
As such, for \eqref{eq.c'}, it suffices to take $L:=\lfloor p^{1/(4m-2+\epsilon)(2m-1+\epsilon)}\rfloor$, which also satisfies \eqref{eq.a'}.
For simplicity, we take $\epsilon=1$.
\qed
\end{proof}

\section{The Main Result}

We are now ready to state the main result of this chapter, which is a generalization of the main result in~\cite{BourgainDFKK:11}.
Later in this section, we will maximize $\epsilon_0$ such that this result implies $\mathrm{ExRIP}[1/2+\epsilon_0]$ with the matrix construction from~\cite{BourgainDFKK:11}.

\begin{theorem}[The BDFKK restricted isometry machine]
\label{theorem.machine}
For every prime $p$, define subsets $\mathcal{A}=\mathcal{A}(p)$ and $\mathcal{B}=\mathcal{B}(p)$ of $\mathbb{F}_p$.
Suppose there exist constants $m\in2\mathbb{N}$, $\ell,\gamma>0$ independent of $p$ such that the following conditions apply:
\begin{description}[(b)]
\item[(a)]
For every sufficiently large $p$, and for every $a\in\mathcal{A}$ and $a_1,\ldots,a_{2m}\in\mathcal{A}\setminus\{a\}$,
\begin{equation*}
\sum_{j=1}^m\frac{1}{a-a_j}=\sum_{j=m+1}^{2m}\frac{1}{a-a_j}
\end{equation*}
only if $(a_1,\ldots,a_m)$ and $(a_{m+1},\ldots,a_{2m})$ are permutations of each other.
Here, division (and addition) is taken in the field $\mathbb{F}_p$.
\item[(b)]
For every $\epsilon>0$, there exists $P=P(\epsilon)$ such that for every $p\geq P$, every subset $S\subseteq\mathcal{B}(p)$ with $|S|\geq p^\ell$ satisfies $E(S,S)\leq p^{-\gamma+\epsilon}|S|^3$.
\end{description}
Pick $\alpha$ such that
\begin{equation}
\label{eq.alpha}
\Omega(p^\alpha)\leq|\mathcal{A}(p)|\leq p^\alpha,
\qquad
|\mathcal{B}(p)|\geq \Omega(p^{1-\alpha+\epsilon'})
\end{equation}
for some $\epsilon'>0$ and every sufficiently large $p$.
Pick $\epsilon_1>0$ for which there exist $\alpha_1,\alpha_2,\epsilon,x,y>0$
such that
\begin{align}
\label{eq.A}
\epsilon_1+\epsilon
&<\alpha_1-\alpha-(4/3)x-\epsilon,\\
\label{eq.B}
\ell
&\leq 1/2+(4/3)x-\alpha_1+\epsilon/2,\\
\label{eq.D}
\epsilon_1+\epsilon
&<\gamma/4-y/4-\epsilon,\\
\label{eq.E}
\alpha_2
&\geq 9x+\epsilon,\\
\label{eq.F.final}
c_0y/8-(\alpha_1/4+9\alpha_2/8)/m
&\leq x/8-\alpha/4,\\
\label{eq.G.final}
\epsilon_1+\epsilon
&<c_0y/8-(\alpha_1/4+9\alpha_2/8)/m,\\
\label{eq.H}
my
&\leq\min\{1/2-\alpha_1,1/2-\alpha_2\},\\
\label{eq.I}
3\alpha_2-2\alpha_1
&\leq(2-c_0)my.
\end{align}
Here, $c_0=1/10430$ is a constant from Proposition~2 in~\cite{BourgainG:11}.
Then for sufficiently large $p$, the $p\times|\mathcal{A}(p)||\mathcal{B}(p)|$ matrix with columns
\begin{equation*}
u_{a,b}:=\frac{1}{\sqrt{p}}\Big(e^{2\pi i(ax^2+bx)/p}\Big)_{x\in\mathbb{F}_p}
\qquad
a\in\mathcal{A},b\in\mathcal{B}
\end{equation*}
satisfies $(p^{1/2+\epsilon_1/2-\epsilon''},\delta)$-RIP for any $\epsilon''>0$ and $\delta<\sqrt{2}-1$, thereby implying $\mathrm{ExRIP}[1/2+\epsilon_1/2]$.
\end{theorem}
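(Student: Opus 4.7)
The overall plan is to chain together the three reductions developed in Section~2: weak flat RIP $\Rightarrow$ flat RIP $\Rightarrow$ RIP $\Rightarrow$ RIP at a larger sparsity. Concretely, I will aim to establish $(K_0,\theta')$-weak flat RIP for $K_0 = \lfloor c\sqrt{p}\rfloor$ and $\theta' = p^{-\epsilon_1 - O(\epsilon)}$. The chirp columns have coherence $\mu = O(1/\sqrt{p})$ (from the Gauss sum evaluation), so $\mu \le 1/K_0$; Lemma~\ref{lemma.b} then gives $(K_0,\sqrt{\theta'})$-flat RIP, Lemma~\ref{lemma.a} upgrades this to $(K_0,\delta_0)$-RIP with $\delta_0 = O(\sqrt{\theta'}\log K_0)$, and finally Lemma~\ref{lemma.c} with $s = p^{\epsilon_1/2 - \epsilon''}$ yields $(sK_0, 2s\delta_0)$-RIP, which is $(p^{1/2+\epsilon_1/2-\epsilon''},\delta)$-RIP with $\delta \to 0$ polynomially in $p$.

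To prove weak flat RIP, I will pick disjoint $\Omega_1,\Omega_2 \subseteq \mathcal{A}\times\mathcal{B}$ with $|\Omega_i| \le K_0$ and bound the cross Gram inner product. Because $\langle u_{a,b_1},u_{a,b_2}\rangle = \delta_{b_1,b_2}$, disjointness kills the diagonal $a_1 = a_2$ contribution. On the off-diagonal, I substitute the Gauss-sum expression for $\langle u_{a_1,b_1},u_{a_2,b_2}\rangle$ and reduce (as in the ``alternate expression'' computation in Section~3) to bounding
\begin{equation*}
\frac{1}{\sqrt{p}} \sum_{\substack{a_1 \in A_1, a_2 \in A_2 \\ a_1 \ne a_2}} \bigg| \sum_{\substack{b_1 \in \Omega_1(a_1) \\ b_2 \in \Omega_2(a_2)}} e_p\Big(-\tfrac{(b_1-b_2)^2}{4(a_1-a_2)}\Big) \bigg|
\end{equation*}
by $\theta' K_0$. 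This is the core estimate.

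I will handle it by a dyadic decomposition of the row slices $\Omega_i(a)$ according to their sizes, using the two thresholds $p^{\alpha_1}$ and $p^{\alpha_2}$ that appear in the parameter list. Slices of size below $p^{\alpha_1}$ contribute little and are absorbed by trivial bounds together with the size of $\mathcal{A}$, which is where \eqref{eq.alpha} and \eqref{eq.A}--\eqref{eq.B} enter. For slices of size at least $p^\ell$, condition~(b) furnishes the bound $E(\Omega_i(a),\Omega_i(a)) \le p^{-\gamma+\epsilon}|\Omega_i(a)|^3$; substituting this into Lemma~\ref{lemma.9} gives a genuine save of $p^{-\gamma/4 + \epsilon}$ per $(a_1,a_2)$ pair, which is the content of \eqref{eq.D}. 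The delicate piece is what happens for the \emph{large} slices, those with size up to $p^{\alpha_2}$: here I cannot afford to apply Lemma~\ref{lemma.9} pair-by-pair, and instead I will Cauchy--Schwarz in $(a_1,a_2)$ a total of $2m$ times (as in the second half of the proof of Lemma~\ref{lemma.9}) to expose a sum over $a,a_1,\ldots,a_{2m}\in\mathcal{A}$. Condition~(a) forces the ``diagonal'' $(a_1,\ldots,a_m)\equiv(a_{m+1},\ldots,a_{2m})$ (as multisets), and for the nondiagonal contribution I will invoke the Weil-type estimate for $\sum_{a\in\mathcal{A}} e_p(\lambda/(a - a_0))$ from Proposition~2 of~\cite{BourgainG:11}, which gains a factor of $p^{-c_0 y}$ where $y$ parameterizes the differences $b_1-b_2$. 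This is the source of \eqref{eq.F.final}--\eqref{eq.I}.

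The main obstacle is the bookkeeping: every inequality \eqref{eq.A}--\eqref{eq.I} corresponds to requiring one of the dyadic contributions to fall below $\theta' K_0 = p^{1/2-\epsilon_1-O(\epsilon)}$, and the mixed regime (one large slice, one small) must be handled by interpolating between the Lemma~\ref{lemma.9} bound and the moment estimate. Once all contributions are shown individually to be $\le p^{1/2-\epsilon_1-\epsilon}$, summing the $O(\log^2 p)$ dyadic shells and absorbing the log losses into $\epsilon''$ yields weak flat RIP with the claimed $\theta'$, and the reductions from the first paragraph complete the proof.
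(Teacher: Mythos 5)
Your high-level architecture matches the paper's: you chain weak flat RIP through Lemma~\ref{lemma.b}, Lemma~\ref{lemma.a}, and Lemma~\ref{lemma.c} (with $s = p^{\epsilon_1/2-\epsilon''}$); disjointness kills the $a_1=a_2$ diagonal; the Gauss-sum evaluation turns the cross Gram estimate into the exponential sum $S(A_1,A_2)$; and the dyadic shells $A_i^{(k)}:=\{a_i\in A_i:2^{k-1}\le|\Omega_i(a_i)|<2^k\}$ make slice sizes comparable, reducing the whole theorem to the bound $|S(A_1,A_2)|\le p^{1-\epsilon_1-\epsilon}$ proved in Lemma~\ref{lemma.2}. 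Your reading of the two easy cases is also right: when $|A_1|M_1$ is small the triangle inequality plus \eqref{eq.A}--\eqref{eq.B} close the bound; and when all slices exceed $p^\ell$ but $|A_1|,|A_2|<p^y$, hypothesis~(b) feeds into Lemma~\ref{lemma.9} and \eqref{eq.D} closes it.

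The gap is in the hard case $|A_2|\ge p^y$. You propose to Cauchy--Schwarz in $(a_1,a_2)$ repeatedly and then invoke a Weil-type estimate for $\sum_{a\in\mathcal{A}}e_p(\lambda/(a-a_0))$. This cannot work: the theorem places no structural hypothesis on $\mathcal{A}$ beyond hypothesis~(a) and the cardinality bound, so no Weil bound over $\mathcal{A}$ is available---Weil gives cancellation over full residue systems or rational points, not over arbitrary subsets of size $p^\alpha$. Moreover $y$ is not a parameter for the differences $b_1-b_2$; it is the exponent governing $|A_2|\ge p^y$. In the paper, the hard case is handled through the intermediate statement $\mathrm{L10}$ (Lemma~\ref{lemma.10}), whose proof uses Cauchy--Schwarz once and then H\"older with exponent $m$, controls the $\xi=0$ term by hypothesis~(a) (giving $\lambda(0)=(m/2)!|A_2|^{m/2}$ and $\|\lambda\|_2^2=m!|A_2|^m$), and bounds the $\xi\neq0$ contribution via Corollary~2 of~\cite{BourgainDFKK:11}, an incidence bound on $\sum_b\lambda(b)\|1_A*1_{bA}\|_2$ applied with $A=B-B$ and a probability measure built from the sums of reciprocals $1/(a_1-a_2^{(i)})$. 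The constant $c_0$ enters as the exponent on $(\|\lambda_1\|_2+|B-B|^{-1/2}+|B-B|^{1/2}p^{-1/2})$, not through an exponential sum over $\mathcal{A}$. Finally, and separately, your plan omits the key maximal-subset device in Lemma~\ref{lemma.2}: one picks a maximal $B_0\subseteq\Omega_1(a_1)$ satisfying the $\mathrm{L10}$ bound \eqref{eq.6.5} and shows (by Lemma~\ref{corollary.1} and maximality) that the complement $B_1=\Omega_1(a_1)\setminus B_0$ must have $E(B_1,B_1)\le p^{-x}M_1^3$, which then feeds into Lemma~\ref{lemma.9}. There is no ``interpolation in the mixed regime''; it is this maximality argument that closes the estimate, and it is absent from your sketch.
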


Let's briefly discuss the structure of the proof of this result.
As indicated in Section~2, the method is to prove flat-RIP-type cancellations, namely that
\begin{equation}
\label{eq.defn of S}
S(A_1,A_2):=\sum_{\substack{a_1\in A_1\\a_2\in A_2}}\sum_{\substack{b_1\in\Omega_1(a_1)\\b_2\in\Omega_2(a_2)}}\bigg(\frac{a_1-a_2}{p}\bigg)e_p\bigg(\frac{(b_1-b_2)^2}{2(a_1-a_2)}\bigg)
\end{equation}
has size $\leq p^{1-\epsilon_1-\epsilon}$ whenever $\Omega_1$ and $\Omega_2$ are disjoint with size $\leq\sqrt{p}$.
(Actually, we get to assume that these subsets and the $\Omega_i(a_i)$'s satisfy certain size constraints since we have an extra $-\epsilon$ in the power of $p$; having this will imply the general case without the $\epsilon$, as made clear in the proof of Theorem~\ref{theorem.machine}.)
This bound is proved by considering a few different cases.
First, when the $\Omega_i(a_i)$'s are small, \eqref{eq.defn of S} is small by a triangle inequality.
Next, when the $\Omega_i(a_i)$'s are large, then we can apply a triangle inequality over each $A_i$ and appeal to hypothesis~(b) in Theorem~\ref{theorem.machine} and Lemma~\ref{lemma.9}.
However, this will only give sufficient cancellation when the $A_i$'s are small.
In the remaining case, Bourgain et al. prove sufficient cancellation by invoking Lemma~10 in~\cite{BourgainDFKK:11}, which concerns the following quantity:
\begin{equation}
\label{eq.defn of T}
T_{a_1}(A_2,B):=\sum_{\substack{b_1\in B\\a_2\in A_2,~b_2\in\Omega_2(a_2)}}\bigg(\frac{a_1-a_2}{p}\bigg)e_p\bigg(\frac{(b_1-b_2)^2}{4(a_1-a_2)}\bigg).
\end{equation}
Specifically, Lemma~10 in~\cite{BourgainDFKK:11} gives that $|T_{a_1}(A_2,B)|$ is small whenever $B$ has sufficient additive structure.
In the proof of the main result, they take a maximal subset $B_0\subseteq\Omega_1(a_1)$ such that $|T_{a_1}(A_2,B_0)|$ is small, and then they use this lemma to show that $\Omega_1(a_1)\setminus B_0$ necessarily has little additive structure.
By Lemma~\ref{lemma.9}, this in turn forces $|T_{a_1}(A_2,B_1)|$ to be small, and so $|T_{a_1}(A_2,\Omega_1(a_1))|$ (and furthermore $|S(A_1,A_2)|$) are also small due to a triangle inequality.
The reader is encouraged to find more details in the proofs found in Section~5.

What follows is a generalized version of the statement of Lemma~10 in~\cite{BourgainDFKK:11}, which we then use in the hypothesis of a generalized version of Lemma~2 in~\cite{BourgainDFKK:11}:

\begin{definition}
Let $\mathrm{L10}=\mathrm{L10}[\alpha_1,\alpha_2,k_0,k_1,k_2,m,y]$ denote the following statement about subsets $\mathcal{A}=\mathcal{A}(p)$ and $\mathcal{B}=\mathcal{B}(p)$ of $\mathbb{F}_p$ for $p$ prime:

\medskip
\noindent
For every $\epsilon>0$, there exists $P>0$ such that for every $p\geq P$ the following holds:

Take $\Omega_1,\Omega_2\subseteq\mathcal{A}\times\mathcal{B}$ such that
\begin{equation}
\label{eq.6.2}
|A_2|\geq p^{y},
\end{equation}
and for which there exist powers of two $M_1,M_2$ such that
\begin{equation}
\label{eq.2.9a}
\frac{M_i}{2}\leq |\Omega_i(a_i)|<M_i
\end{equation}
and
\begin{equation}
\label{eq.2.9b}
|A_i|M_i\leq 2\sqrt{p}
\end{equation}
for $i=1,2$ and for every $a_i\in A_i$. Then for every $B\subseteq\mathbb{F}_p$ such that
\begin{equation}
\label{eq.6.3}
p^{1/2-\alpha_1}\leq|B|\leq p^{1/2}
\end{equation}
and
\begin{equation}
\label{eq.6.4}
|B-B|\leq p^{\alpha_2}|B|,
\end{equation}
we have that \eqref{eq.defn of T} satisfies
\begin{equation}
\label{eq.6.5}
|T_{a_1}(A_2,B)|\leq|B|p^{1/2-\epsilon_2+\epsilon}
\end{equation}
with $\epsilon_2=k_0y-(k_1\alpha_1+k_2\alpha_2)/m$ for every $a_1\in A_1$.
\end{definition}

\begin{lemma}[generalized version of Lemma~2 in~\cite{BourgainDFKK:11}]
\label{lemma.2}
Take $\mathcal{A}$ arbitrarily and $\mathcal{B}$ satisfying the hypothesis~(b) in Theorem~\ref{theorem.machine}, pick $\alpha$ such that $|\mathcal{A}(p)|\leq p^\alpha$ for every sufficiently large $p$, and pick $\epsilon,\epsilon_1,x>0$ such that $\mathrm{L10}$ holds with \eqref{eq.A}--\eqref{eq.E} and
\begin{equation}
\label{eq.FG}
\epsilon_1+\epsilon
<\epsilon_2
\leq x/8-\alpha/4.
\end{equation}
Then the following holds for every sufficiently large $p$:

Take $\Omega_1,\Omega_2\subseteq\mathcal{A}\times\mathcal{B}$ for which there exist powers of two $M_1,M_2$ such that \eqref{eq.2.9a} and \eqref{eq.2.9b} hold for $i=1,2$ and every $a_i\in A_i$.
Then \eqref{eq.defn of S} satisfies $|S(A_1,A_2)|\leq p^{1-\epsilon_1-\epsilon}$.
\end{lemma}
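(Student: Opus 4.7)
The plan is to bound $|S(A_1,A_2)|$ by a case analysis on the dyadic sizes $M_1,M_2$ and on the additive structure of the sections $\Omega_1(a_1)$, exploiting the cancellation in the quadratic phases via Lemma~\ref{lemma.9} and appealing to the L10 hypothesis precisely in the regime where Lemma~\ref{lemma.9} alone fails. The first step is to rewrite $S(A_1,A_2) = \sum_{a_1 \in A_1} T_{a_1}(A_2,\Omega_1(a_1))$ (absorbing the factor-of-two discrepancy between \eqref{eq.defn of S} and \eqref{eq.defn of T} into constants), reducing the task to a uniform bound on $|T_{a_1}(A_2,\Omega_1(a_1))|$ of order $p^{1-\epsilon_1-\epsilon}/|A_1|$; the outer triangle inequality over $a_1 \in A_1$ combined with \eqref{eq.2.9b} and $|A_1| \leq p^\alpha$ then delivers the target.

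\textbf{Easy regime.} When both $M_1, M_2 \geq p^\ell$, hypothesis~(b) of Theorem~\ref{theorem.machine} applies to each $\Omega_i(a_i)$, giving $E(\Omega_i(a_i),\Omega_i(a_i)) \leq p^{-\gamma+\epsilon}|\Omega_i(a_i)|^3$. Plugging into Lemma~\ref{lemma.9} yields cancellation of order $p^{-\gamma/4}$ per pair $(a_1,a_2)$; summing via the triangle inequality over $A_1 \times A_2$, using \eqref{eq.2.9b} to convert the $|A_i||\Omega_i(a_i)|$ factors into $p$, and comparing with \eqref{eq.D} then gives $|S| \leq p^{1-\epsilon_1-\epsilon}$.

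\textbf{Hard regime.} When some $M_i$ falls below $p^\ell$, hypothesis~(b) is unavailable and I would turn to L10. For each fixed $a_1 \in A_1$, the plan is to greedily extract a maximal subset $B_0 \subseteq \Omega_1(a_1)$ which captures the additive structure, iteratively absorbing any piece that simultaneously satisfies the size constraint \eqref{eq.6.3} and the small-doubling constraint \eqref{eq.6.4}. The L10 hypothesis bounds the contribution of $B_0$ as $|T_{a_1}(A_2,B_0)| \leq |B_0|\,p^{1/2-\epsilon_2+\epsilon}$, and the lower bound $\epsilon_2 > \epsilon_1 + \epsilon$ from \eqref{eq.FG} makes this of the desired order. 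For the residue $B_1 := \Omega_1(a_1) \setminus B_0$, termination of the extraction together with the contrapositive of Corollary~\ref{corollary.1} forces $E(B_1,B_1)$ to be small: any subset of $B_1$ of density $\gtrsim 1/K$ has doubling $\gtrsim K^9$, and balancing these via \eqref{eq.A}, \eqref{eq.B}, \eqref{eq.E} produces $E(B_1,B_1) \leq p^{-\eta}|B_1|^3$ for some $\eta$ strictly larger than $4(\epsilon_1+\epsilon)$. A final application of Lemma~\ref{lemma.9} to $T_{a_1}(A_2,B_1)$ delivers the required bound, and the triangle inequality $|T_{a_1}(A_2,\Omega_1(a_1))| \leq |T_{a_1}(A_2,B_0)| + |T_{a_1}(A_2,B_1)|$ combines the two pieces.

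\textbf{Main obstacle.} The technical heart is the quantitative trade-off in the hard regime: the greedy extraction uses Corollary~\ref{corollary.1} to convert high additive energy on a subset into small doubling, but this costs a factor $K^9$ in the doubling and $1/K$ in the density. Showing that \eqref{eq.A}, \eqref{eq.B}, \eqref{eq.E}, \eqref{eq.FG} interlock so as to produce $\eta > 4(\epsilon_1+\epsilon)$ is the crux of the calculation, and this is what \emph{forces} the particular algebraic form of these conditions. Once that is pinned down, Lemma~\ref{lemma.9} and the triangle inequality handle everything else, and the dyadic loss from the $O(\log p)$ possible values of the $M_i$'s costs only a polylogarithmic factor absorbed into the $\epsilon$-slack.
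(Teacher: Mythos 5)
The overall architecture — rewrite $S$ as $\sum_{a_1} T_{a_1}$, split into a trivial case, a Lemma~\ref{lemma.9}$+$hypothesis~(b) case, and a case where $\mathrm{L10}$ is deployed via a maximal extracted set $B_0$ and a low-energy residue $B_1$ — is indeed the paper's strategy. But your case split is assigned to the wrong hypotheses, and as written the two regimes do not close.

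Your ``easy regime'' (both $M_i\geq p^\ell$) concludes by invoking \eqref{eq.D}, but \eqref{eq.D} contains a $-y/4$ and is only calibrated to absorb a factor $|A_1|^{1/8}|A_2|^{1/8}\leq p^{y/4}$. After Lemma~\ref{lemma.9} and hypothesis~(b), one actually gets $|S|\leq 2^{7/4}|A_1|^{1/8}|A_2|^{1/8}p^{1-\gamma/4+\epsilon}$, which is only small enough when $|A_1|,|A_2|<p^y$; if $|A_2|\geq p^y$ the bound is not $\leq p^{1-\epsilon_1-\epsilon}$ no matter how large $M_i$ is. That sub-case must go to $\mathrm{L10}$ — and, crucially, $|A_2|\geq p^y$ is exactly hypothesis \eqref{eq.6.2} of $\mathrm{L10}$, which you never verify. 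Conversely, your ``hard regime'' (some $M_i<p^\ell$) cannot be salvaged by $\mathrm{L10}$: if $M_1<p^\ell$, then $|\Omega_1(a_1)|<p^\ell$, and unless $\ell>1/2-\alpha_1$ no subset $B_0\subseteq\Omega_1(a_1)$ can satisfy the size requirement \eqref{eq.6.3}, so $\mathrm{L10}$ is vacuous there. In the paper this regime is handled by the plain triangle inequality: when $|A_1|M_1<p^{1/2+(4/3)x+\alpha-\alpha_1+\epsilon}$ the whole sum is trivially $\leq 2p^{1+(4/3)x+\alpha-\alpha_1+\epsilon}\leq p^{1-\epsilon_1-\epsilon}$ by \eqref{eq.A}; the contrapositive then forces $M_1\geq p^{1/2+(4/3)x-\alpha_1+\epsilon}$, hence $|\Omega_1(a_1)|>p^\ell$ via \eqref{eq.B}, and only then does one split on whether $|A_i|<p^y$.

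A smaller but real issue: the residue bound you assert, $E(B_1,B_1)\leq p^{-\eta}|B_1|^3$, is not what the maximality argument delivers. The correct claim is $E(B_1,B_1)\leq p^{-x}M_1^3$, proved by contradiction: if it fails, then $|B_1|>p^{-x/3}M_1$ and $E(B_1,B_1)>p^{-x}|B_1|^3$, whence Lemma~\ref{corollary.1} with $K=p^x$ produces $B_1'\subseteq B_1$ with $|B_1'|\geq p^{1/2-\alpha_1}$ (using \eqref{eq.6.1}) and $|B_1'-B_1'|\leq p^{\alpha_2}|B_1|$ (using \eqref{eq.E}), i.e.\ a set satisfying \eqref{eq.6.3}--\eqref{eq.6.4} that $\mathrm{L10}$ would admit, contradicting maximality of $B_0$. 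The $M_1^3$ rather than $|B_1|^3$ matters when you push the bound back through \eqref{eq.6.8} and \eqref{eq.2.9b} at the end.
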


The following result gives sufficient conditions for $\mathrm{L10}$, and thus Lemma~\ref{lemma.2} above:

\begin{lemma}[generalized version of Lemma~10 in~\cite{BourgainDFKK:11}]
\label{lemma.10}
Suppose $\mathcal{A}$ satisfies hypothesis~(a) in Theorem~\ref{theorem.machine}.
Then $\mathrm{L10}$ is true with $k_0=c_0/8$, $k_1=1/4$ and $k_2=9/8$ provided \eqref{eq.H} and \eqref{eq.I} are satisfied.
\end{lemma}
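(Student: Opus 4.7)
I would follow the $2m$-th moment method: raise $|T_{a_1}(A_2,B)|^{2m}$ via H\"older, use hypothesis~(a) of Theorem~\ref{theorem.machine} to collapse the diagonal contribution, and appeal to Proposition~2 of~\cite{BourgainG:11} (whence the constant $c_0$) to handle the off-diagonal. Writing $T_{a_1}(A_2,B) = \sum_{b_1\in B} F(b_1)$ with
\begin{equation*}
F(b_1) := \sum_{a_2\in A_2}\sum_{b_2\in\Omega_2(a_2)} \bigl(\tfrac{a_1-a_2}{p}\bigr)\, e_p\!\left(\frac{(b_1-b_2)^2}{4(a_1-a_2)}\right),
\end{equation*}
H\"older gives $|T_{a_1}(A_2,B)|^{2m}\leq |B|^{2m-1}\sum_{b_1\in B}|F(b_1)|^{2m}$. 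Expanding the $2m$-th power and interchanging sums, the inner sum over $b_1$ takes the form $\sum_{b_1\in B} e_p(\alpha b_1^2 + \beta b_1 + \gamma)$, where, for a tuple $(a_2^{(j)},b_2^{(j)})_{j=1}^{2m}$,
\begin{equation*}
\alpha = \frac{1}{4}\bigg(\sum_{j=1}^m \frac{1}{a_1-a_2^{(j)}} - \sum_{j=m+1}^{2m}\frac{1}{a_1-a_2^{(j)}}\bigg).
\end{equation*}

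For the \emph{diagonal} tuples, those with $\alpha = 0$, hypothesis~(a) of Theorem~\ref{theorem.machine} forces $(a_2^{(1)},\dots,a_2^{(m)})$ to be a permutation of $(a_2^{(m+1)},\dots,a_2^{(2m)})$ (terms with $a_2^{(j)} = a_1$ vanish automatically through the Legendre symbol). There are at most $\sim m!\,|A_2|^m M_2^{2m}$ such tuples, and the inner $b_1$-sum is bounded trivially by $|B|$, so their combined contribution to $|T_{a_1}|^{2m}$ is within the target bound provided~\eqref{eq.H} holds.

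For the \emph{off-diagonal} tuples with $\alpha\neq 0$, I would apply Proposition~2 of~\cite{BourgainG:11} to the inner quadratic exponential sum. Since that proposition handles linear sums over sets of small doubling, one first linearizes via a Cauchy--Schwarz in $b_1$: the square $|\sum_{b_1\in B} e_p(\alpha b_1^2+\beta b_1)|^2$ differences the quadratic phase and, after the substitution $h = b_1-b_1'$, leaves sums of the form $\sum_{b_1'\in B} e_p(2\alpha h\, b_1')$ weighted by convolution counts on $B-B$. These are linear exponential sums over $B$ with nontrivial frequency, to which Bourgain--Glibichuk applies using the small-doubling hypothesis~\eqref{eq.6.4} and the size hypothesis~\eqref{eq.6.3}; condition~\eqref{eq.I} is exactly the regime constraint needed for the proposition to fire. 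The resulting power saving $|B|^{1-c_0+o(1)}$ gets diluted by the linearizing Cauchy--Schwarz and the $2m$-th root into the factor $c_0/8$ multiplying $y$ in $\epsilon_2$.

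Assembling diagonal and off-diagonal contributions, taking $2m$-th roots, and bookkeeping the exponents of $p$ yields $|T_{a_1}(A_2,B)|\leq |B|\,p^{1/2-\epsilon_2+\epsilon}$ with $\epsilon_2 = c_0 y/8 - (\alpha_1/4 + 9\alpha_2/8)/m$; the coefficients $1/4$ and $9/8$ record how $|B|\geq p^{1/2-\alpha_1}$ and $|B-B|\leq p^{\alpha_2}|B|$ enter the off-diagonal count before being absorbed by the $2m$-th root. The principal obstacle is the correct linearization and invocation of Proposition~2 of~\cite{BourgainG:11}: verifying its hypotheses hold in our regime (which is precisely what~\eqref{eq.H} and~\eqref{eq.I} are calibrated to ensure) and carefully shepherding the exponent losses through the H\"older step and the linearizing Cauchy--Schwarz. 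The opaque constant $c_0 = 1/10430$ is inherited unchanged, and every numerical coefficient in the conclusion is tuned to accommodate it.
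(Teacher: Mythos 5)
Your proposal captures the broad strategy (moments plus Hölder, the constant $c_0$ from Bourgain--Glibichuk, hypothesis~(a) collapsing the $\xi=0$ contribution), but the structure you propose differs from the paper's in ways that matter, and the missing bookkeeping is not a formality.

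The paper does \emph{not} raise $|T_{a_1}(A_2,B)|$ to the $2m$-th power directly. It first applies Cauchy--Schwarz with the sum grouped over $(a_2,b_2)$ (using $|\Omega_2|\leq 2\sqrt{p}$) to get $|T|^2\leq 2\sqrt{p}\sum_{b_1,b\in B}|F(b,b_1)|$, where $F$ depends on \emph{two} $B$-variables, and then applies Hölder with exponent $m$ to that double sum. This two-variable structure is essential: it permits the change of variables $x=b_1+b\in B+B$, $y=b_1-b\in B-B$ that converts the Hölder moment into a weighted exponential sum over $B+B$ and $B-B$. The sumset $B+B$ subsequently enters through an $L^4$--Hölder and Parseval, producing a $|B+B|^{3/4}$ factor, which is bounded via Lemma~4 in~\cite{BourgainDFKK:11} ($|B+B|\leq |B-B|^2/|B|\leq p^{2\alpha_2}|B|$). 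This is precisely where the coefficient $9/8$ on $\alpha_2$ originates. Your expansion $|T|^{2m}\leq|B|^{2m-1}\sum_{b_1}|F(b_1)|^{2m}$ only ever produces a single $B$-variable; after the linearizing Cauchy--Schwarz you get $B-B$ but never $B+B$, so the $9\alpha_2/8$ coefficient cannot appear by the mechanism you describe, and the final asserted $\epsilon_2$ would not emerge from your route.

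A second gap is in the invocation of Proposition~2 of~\cite{BourgainG:11}. The paper does not apply that result to a single linear exponential sum with a fixed frequency; it applies Corollary~2 of~\cite{BourgainDFKK:11}, namely a bound on $\sum_{b}\lambda(b)\|1_A*1_{bA}\|_2$ for a \emph{probability measure} $\lambda$ with small $\ell^2$-norm, where the $\ell^2$-smallness is exactly the counting input $\|\lambda\|_2^2=m!|A_2|^m$ supplied by hypothesis~(a). Reaching this structure requires the convolutions $\zeta*\zeta$, the Parseval identity applied to $(\zeta'*\zeta')^{\widehat{}}$, and a careful normalization producing the measure $\lambda_1'(b)=\lambda_1(\xi'/b)$ — none of which is visible in your plan. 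Saying that ``Bourgain--Glibichuk applies using the small-doubling hypothesis'' to $\sum_{b_1'\in B}e_p(2\alpha h b_1')$ for a fixed $\alpha$ is not correct: the power saving there comes from averaging over a measure on the dilating frequencies, not from a single-frequency estimate. Without reconstructing that averaging structure, the $c_0$ saving will not materialize. In short, the skeleton of your plan is reasonable, but the two load-bearing steps — the $B+B$ change of variables and the precise reduction to Corollary~2 via $\zeta*\zeta$ — are absent, and those are exactly where the exponents $k_0=c_0/8$, $k_1=1/4$, $k_2=9/8$ are produced.
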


These lemmas are proved in Section~5.
With these in hand, we are ready to prove the main result:

\begin{proof}[Proof of Theorem~\ref{theorem.machine}]
By Lemma~\ref{lemma.10}, we have that $\mathrm{L10}$ is true with
\begin{equation*}
\epsilon_2=c_0y/8-(\alpha_1/4+9\alpha_2/8)/m.
\end{equation*}
As such, \eqref{eq.F.final} and \eqref{eq.G.final} together imply \eqref{eq.FG}, and so the conclusion of Lemma~\ref{lemma.2} holds.
We will use this conclusion to show that the matrix identified in Theorem~\ref{theorem.machine} satisfies $(p^{1/2},p^{-\epsilon_1})$-weak flat RIP.
Indeed, this will imply $(p^{1/2},p^{-\epsilon_1/2})$-flat RIP by Lemma~\ref{lemma.b}, $(p^{1/2},75p^{-\epsilon_1/2}\log p)$-RIP by Lemma~\ref{lemma.a}, and $(p^{1/2+\epsilon_1/2-\epsilon''},75p^{-\epsilon''}\log p)$-RIP for any $\epsilon''>0$ by Lemma~\ref{lemma.c} (taking $s=p^{\epsilon_1/2-\epsilon''}$).
Since $75p^{-\epsilon''}\log p<\sqrt{2}-1$ for sufficiently large $p$, this will prove the result.

To demonstrate $(p^{1/2},p^{-\epsilon_1})$-weak flat RIP, pick disjoint $\Omega_1,\Omega_2\subseteq\mathcal{A}\times\mathcal{B}$ of size $\leq p^{1/2}$.
We need to show
\begin{equation*}
\bigg|\bigg\langle\sum_{(a_1,b_1)\in\Omega_1}u_{a_1,b_1},\sum_{(a_2,b_2)\in\Omega_2}u_{a_2,b_2}\bigg\rangle\bigg|\leq p^{1/2-\epsilon_1}.
\end{equation*}
Recall that
\begin{align*}
&\sum_{(a_1,b_1)\in\Omega_1}\sum_{(a_2,b_2)\in\Omega_2}\langle u_{a_1,b_1},u_{a_2,b_2}\rangle\\
&\qquad\qquad=\sum_{\substack{a_1\in A_1\\a_2\in A_2}}\sum_{\substack{b_1\in\Omega_1(a_1)\\b_2\in\Omega_2(a_2)}}\langle u_{a_1,b_1},u_{a_2,b_2}\rangle\\
&\qquad\qquad=\sum_{\substack{a_1\in A_1\\a_2\in A_2\setminus A_1}}\sum_{\substack{b_1\in\Omega_1(a_1)\\b_2\in\Omega_2(a_2)}}\frac{\sigma_p}{\sqrt{p}}\bigg(\frac{a_1-a_2}{p}\bigg)e_p\bigg(-\frac{(b_1-b_2)^2}{4(a_1-a_2)}\bigg).
\end{align*}
As such, we may assume that $A_1$ and $A_2$ are disjoint without loss of generality, and it suffices to show that
\begin{equation*}
\bigg|\sum_{\substack{a_1\in A_1\\a_2\in A_2}}\sum_{\substack{b_1\in\Omega_1(a_1)\\b_2\in\Omega_2(a_2)}}\bigg(\frac{a_1-a_2}{p}\bigg)e_p\bigg(-\frac{(b_1-b_2)^2}{4(a_1-a_2)}\bigg)\bigg|\leq p^{1-\epsilon_1}.
\end{equation*}
For each $k$, define the set
\begin{equation*}
A_i^{(k)}
:=\{a_i\in A_i:2^{k-1}\leq|\Omega_i(a_i)|<2^k\}.
\end{equation*}
Then we have
\begin{equation*}
|A_i^{(k)}|2^{k-1}
\leq\sum_{a_i\in A_i^{(k)}}|\Omega_i(a_i)|
=|\{(a_i,b_i)\in\Omega_i:a_i\in A_i^{(k)}\}|
\leq|\Omega_i|
\leq p^{1/2}.
\end{equation*}
As such, taking $M_i=2^k$ gives that $A_i\leftarrow A_i^{(k)}$ satisfies \eqref{eq.2.9a} and \eqref{eq.2.9b}, which enables us to apply the conclusion of Lemma~\ref{lemma.2}.
Indeed, the triangle inequality and Lemma~\ref{lemma.2} together give 
\begin{align*}
\bigg|\sum_{\substack{a_1\in A_1\\a_2\in A_2}}\sum_{\substack{b_1\in\Omega_1(a_1)\\b_2\in\Omega_2(a_2)}}\bigg(\frac{a_1-a_2}{p}\bigg)e_p\bigg(-\frac{(b_1-b_2)^2}{4(a_1-a_2)}\bigg)\bigg|
&\leq\sum_{k_1=1}^{\lceil\frac{1}{2}\log_2 p\rceil}\sum_{k_2=1}^{\lceil\frac{1}{2}\log_2 p\rceil}|S(A_1^{(k_1)},A_2^{(k_2)})|\\
&\leq p^{1-\epsilon_1-\epsilon}\log^2 p\\
&\leq p^{1-\epsilon_1}
\end{align*}
for sufficiently large $p$.
\qed
\end{proof}

To summarize Theorem~\ref{theorem.machine}, we may conclude $\mathrm{ExRIP}[1/2+\epsilon_1/2]$ if we can find
\begin{description}[(iii)]
\item[(i)] $m\in2\mathbb{N}$ satisfying hypothesis~(a),
\item[(ii)] $\ell,\gamma>0$ satisfying hypothesis~(b),
\item[(iii)] $\alpha$ satisfying \eqref{eq.alpha}, and
\item[(iv)] $\alpha_1,\alpha_2,\epsilon,x,y>0$ satisfying \eqref{eq.A}--\eqref{eq.I}.
\end{description}
Since we want to conclude $\mathrm{ExRIP}[z]$ for the largest possible $z$, we are inclined to maximize $\epsilon_1$ subject to (i)--(iv), above.
To find $m,\ell,\gamma,\alpha$ which satisfy (i)--(iii), we must leverage a particular construction of $\mathcal{A}$ and $\mathcal{B}$, and so we turn to Lemma~\ref{lemma.construction of a} and Corollary~\ref{corollary.4}.
Indeed, for any given $m$, Lemma~\ref{lemma.construction of a} constructs $\mathcal{A}$ satisfying hypothesis~(a) such that 
\begin{equation}
\label{eq.alpha of m}
\alpha=1/(2m(4m-1))
\end{equation}
satisfies the first part of \eqref{eq.alpha}.
Next, if we take $\beta:=\alpha-\epsilon'$ and define $r:=\lfloor\beta\log_2p\rfloor$ and $M:=2^{1/\beta-1}$, then \eqref{eq.construction of b} constructs $\mathcal{B}$ which, by Corollary~\ref{corollary.4}, satisfies hypothesis~(b) provided
\begin{equation}
\label{eq.ell gamma range}
(2\tau-1)(\ell-\gamma)\geq10\gamma,
\end{equation}
where $\tau$ is the solution to
\begin{equation*}
\Big(\frac{1}{M}\Big)^{2\tau}+\Big(\frac{M-1}{M}\Big)^{\tau}=1.
\end{equation*}
For this construction, $|\mathcal{B}|=M^r\geq\Omega(p^{1-\beta})$, thereby satisfying the second part of $\eqref{eq.alpha}$.

\begin{figure}[t]
\begin{center}
\includegraphics[width=0.6\textwidth]{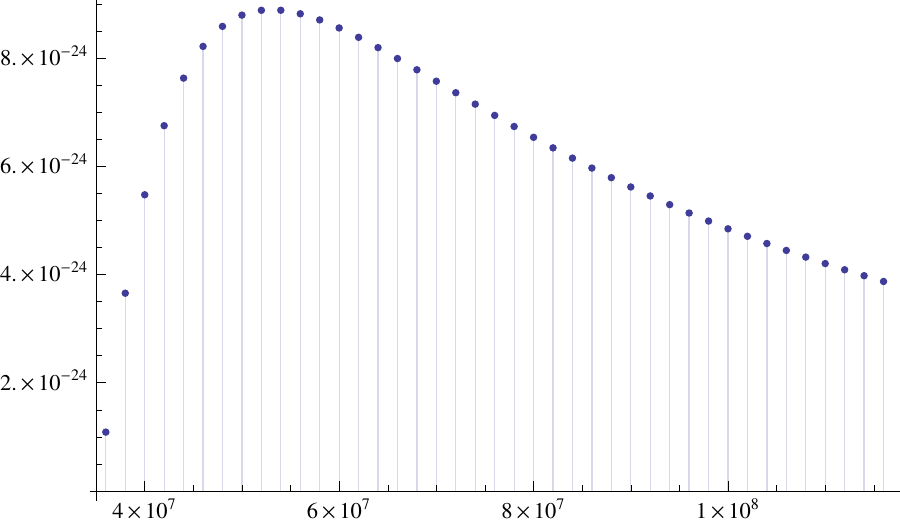}
\end{center}
\caption{The supremum of $\epsilon_1$ as a function of $m$. Taking $\epsilon'=0$, we run a linear program to maximize $\epsilon_1$ subject to the closure of the constraints \eqref{eq.A}--\eqref{eq.I}, \eqref{eq.ell gamma range} for various values of $m$. A locally maximal supremum of $\epsilon_1\approx 8.8933\times 10^{-24}$ appears around $m=53,000,000$.}
\label{fig:1}
\end{figure}

It remains to maximize $\epsilon_1$ for which there exist $m,\epsilon',\ell,\gamma,\alpha_1,\alpha_2,\epsilon,x,y$ satisfying \eqref{eq.A}--\eqref{eq.I}, \eqref{eq.alpha of m} and \eqref{eq.ell gamma range}.
Note that $m$ and $\epsilon'$ determine $\alpha$ and $\tau$, and the remaining constraints \eqref{eq.A}--\eqref{eq.I}, \eqref{eq.ell gamma range} which define the feasible tuples $(\epsilon_1,\ell,\gamma,\alpha_1,\alpha_2,\epsilon,x,y)$ are linear inequalities.
As such, taking the closure of this feasibility region and running a linear program will produce the supremum of $\epsilon_1$ subject to the remaining constraints.
This supremum increases monotonically as $\epsilon'\rightarrow0$, and so we only need to consider the limiting case where $\epsilon'=0$.
Running the linear program for various values of $m$ reveals what appears to be a largest supremum of $\epsilon_1\approx 8.8933\times 10^{-24}$ at $m=53,000,000$ (see Fig.~\ref{fig:1}).
Dividing by $2$ then gives a new record:
\begin{equation*}
\epsilon_0\approx 4.4466\times 10^{-24}.
\end{equation*}
While this optimization makes a substantial improvement (this is over 8,000 times larger than the original record of Bourgain et al.\ in~\cite{BourgainDFKK:11}), the constant is still tiny!
For this particular construction of $\mathcal{A}$ and $\mathcal{B}$, the remaining bottlenecks may lie at the very foundations of additive combinatorics.
For example, if $c_0=1/2$, then taking $m=10,000$ leads to $\epsilon_0$ being on the order of $10^{-12}$.

\section{Proofs of Technical Lemmas}

This section contains the proofs of the technical lemmas (Lemmas~\ref{lemma.2} and~\ref{lemma.10}) which were used to prove the main result (Theorem~\ref{theorem.machine}).

\subsection{Proof of Lemma~\ref{lemma.2}}

First note that $|A_1|M_1<p^{1/2+(4/3)x+\alpha-\alpha_1+\epsilon}$ implies that
\begin{equation*}
|A_1||\Omega_1(a_1)|<p^{1/2+(4/3)x+\alpha-\alpha_1+\epsilon}
\end{equation*}
by \eqref{eq.2.9a}, and by \eqref{eq.2.9b}, we also have
\begin{equation*}
|A_2||\Omega_2(a_2)|<2p^{1/2}.
\end{equation*}
As such, the triangle inequality gives that
\begin{equation*}
|S(A_1,A_2)|\leq|A_1||A_2||\Omega_1(a_1)||\Omega_2(a_2)|\leq 2p^{1+(4/3)x+\alpha-\alpha_1+\epsilon}\leq p^{1-\epsilon_1-\epsilon},
\end{equation*}
where the last step uses \eqref{eq.A}.
Thus, we can assume $|A_1|M_1\geq p^{1/2+(4/3)x+\alpha-\alpha_1+\epsilon}$, and so the assumption $|\mathcal{A}|\leq p^\alpha$ gives
\begin{equation}
\label{eq.6.1}
M_1\geq\frac{1}{|A_1|}p^{1/2+(4/3)x+\alpha-\alpha_1+\epsilon}\geq p^{1/2+(4/3)x-\alpha_1+\epsilon}.
\end{equation}
Applying \eqref{eq.2.9a} and \eqref{eq.6.1} then gives
\begin{equation*}
|\Omega_1(a_1)|\geq\frac{M_1}{2}\geq\frac{1}{2}p^{1/2+(4/3)x-\alpha_1+\epsilon}>p^{1/2+(4/3)x-\alpha_1+\epsilon/2}\geq p^\ell,
\end{equation*}
where the last step uses \eqref{eq.B}.
Note that we can redo all of the preceding analysis by interchanging indices $1$ and $2$.
As such, we also have $|\Omega_2(a_2)|>p^\ell$.
(This will enable us to use hypothesis~(b) in Theorem~\ref{theorem.machine}.)
At this point, we bound
\begin{equation*}
\bigg|\sum_{\substack{b_1\in\Omega_1(a_1)\\b_2\in\Omega_2(a_2)}}e_p\bigg(\frac{(b_1-b_2)^2}{4(a_1-a_2)}\bigg)\bigg|
\end{equation*}
using Lemma~\ref{lemma.9}:
\begin{equation*}
\leq|\Omega_1(a_1)|^{1/2}E(\Omega_1(a_1),\Omega_1(a_1))^{1/8}|\Omega_2(a_2)|^{1/2}E(\Omega_2(a_2),\Omega_2(a_2))^{1/8}p^{1/8}.
\end{equation*}
Next, since $|\Omega_1(a_1)|,|\Omega_2(a_2)|>p^\ell$, hypothesis~(b) in Theorem~\ref{theorem.machine} with $\epsilon\leftarrow4\epsilon$ gives
\begin{equation*}
\leq|\Omega_1(a_1)|^{7/8}|\Omega_2(a_2)|^{7/8}p^{1/8-\gamma/4+\epsilon}.
\end{equation*}
At this point, the triangle inequality gives
\begin{align*}
|S(A_1,A_2)|
&\leq\sum_{\substack{a_1\in A_1\\a_2\in A_2}}\bigg|\sum_{\substack{b_1\in\Omega_1(a_1)\\b_2\in\Omega_2(a_2)}}e_p\bigg(\frac{(b_1-b_2)^2}{4(a_1-a_2)}\bigg)\bigg|\\
&\leq\sum_{\substack{a_1\in A_1\\a_2\in A_2}}|\Omega_1(a_1)|^{7/8}|\Omega_2(a_2)|^{7/8}p^{1/8-\gamma/4+\epsilon}
\end{align*}
which can be further bounded using \eqref{eq.2.9a} and \eqref{eq.2.9b}:
\begin{equation*}
\leq 2^{7/4}|A_1|^{1/8}|A_2|^{1/8}p^{1-\gamma/4+\epsilon}
\end{equation*}
Thus, if $|A_1|,|A_2|<p^y$, then
\begin{equation*}
|S(A_1,A_2)|\leq2^{7/4}p^{1+y/4-\gamma/4+\epsilon}\leq p^{1-\epsilon_1-\epsilon},
\end{equation*}
where the last step uses \eqref{eq.D}.
As such, we may assume that either $|A_1|$ or $|A_2|$ is $\geq p^y$.
Without loss of generality, we assume $|A_2|\geq p^y$.
(Considering \eqref{eq.6.2}, this will enable us to use $\mathrm{L10}$.)

At this point, take $B_0\subseteq\Omega_1(a_1)$ to be a maximal subset satisfying \eqref{eq.6.5} for $B\leftarrow B_0$, and denote $B_1:=\Omega_1(a_1)\setminus B_0$.
Then the triangle inequality gives
\begin{equation*}
|T_{a_1}(A_2,B_1)|\leq\sum_{a_2\in A_2}\bigg|\sum_{\substack{b_1\in B_1\\b_2\in\Omega_2(a_2)}}e_p\bigg(\frac{(b_1-b_2)^2}{4(a_1-a_2)}\bigg)\bigg|,
\end{equation*}
and then Lemma~\ref{lemma.9} gives
\begin{equation*}
\leq\sum_{a_2\in A_2}|B_1|^{1/2}E(B_1,B_1)^{1/8}|\Omega_2(a_2)|^{1/2}E(\Omega_2(a_2),\Omega_2(a_2))^{1/8}p^{1/8}.
\end{equation*}
This can be bounded further by applying $E(\Omega_2(a_2),\Omega_2(a_2))\leq|\Omega_2(a_2)|^3$, \eqref{eq.2.9a}, \eqref{eq.2.9b} and the assumption $|\mathcal{A}|\leq p^\alpha$:
\begin{equation}
\label{eq.*}
\leq 2^{7/8}|B_1|^{1/2}E(B_1,B_1)^{1/8}p^{\alpha/8+9/16}.
\end{equation}
At this point, we claim that $E(B_1,B_1)\leq p^{-x}M_1^3$.
To see this, suppose otherwise.
Then $|B_1|^3\geq E(B_1,B_1)>p^{-x}M_1^3$, implying
\begin{equation}
\label{eq.**}
|B_1|>p^{-x/3}M_1,
\end{equation}
and by \eqref{eq.2.9a}, we also have
\begin{equation*}
E(B_1,B_1)>p^{-x}M_1^3>p^{-x}|\Omega_1(a_1)|^3\geq p^{-x}|B_1|^3.
\end{equation*}
Thus, Lemma~\ref{corollary.1} with $K=p^x$ produces a subset $B_1'\subseteq B_1$ such that
\begin{equation*}
|B_1'|\geq\frac{|B_1|}{20p^x}>\frac{M_1}{20p^{(4/3)x}}\geq\frac{1}{20}p^{1/2-\alpha_1+\epsilon}\geq p^{1/2-\alpha_1}
\end{equation*}
where the second and third inequalities follow from \eqref{eq.**} and \eqref{eq.6.1}, respectively, and
\begin{equation*}
|B_1'-B_1'|\leq 10^7p^{9x}|B_1|\leq p^{9x+\epsilon}|B_1|\leq p^{\alpha_2}|B_1|,
\end{equation*}
where the last step follows from \eqref{eq.E}.
As such, $|B_1'|$ satisfies \eqref{eq.6.3} and \eqref{eq.6.4}, implying that $B\leftarrow B_1'$ satisfies \eqref{eq.6.5} by $\mathrm{L10}$.
By the triangle inequality, $B\leftarrow B_0\cup B_1'$ also satisfies \eqref{eq.6.5}, contradicting $B_0$'s maximality.

We conclude that $E(B_1,B_1)\leq p^{-x}M_1^3$, and continuing \eqref{eq.*} gives
\begin{equation*}
|T_{a_1}(A_2,B_1)|\leq2^{7/8}|B_1|^{1/2}M_1^{3/8}p^{9/16+\alpha/8-x/8}.
\end{equation*}
Now we apply \eqref{eq.6.5} to $B\leftarrow B_0$ and combine with this to get
\begin{align*}
|T_{a_1}(A_2,\Omega_1(a_1))|
&\leq|T_{a_1}(A_2,B_0)|+|T_{a_1}(A_2,B_1)|\\
&\leq|B_0|p^{1/2-\epsilon_1}+2^{7/8}|B_1|^{1/2}M_1^{3/8}p^{9/16+\alpha/8-x/8}.
\end{align*}
Applying $|B_0|,|B_1|\leq|\Omega_1(a_1)|\leq M_1$ by \eqref{eq.2.9a} then gives
\begin{equation*}
|T_{a_1}(A_2,\Omega_1(a_1))|\leq M_1p^{1/2-\epsilon_1}+2^{7/8}M_1^{7/8}p^{9/16+\alpha/8-x/8}.
\end{equation*}
Now we apply the triangle inequality to get
\begin{align*}
|S(A_1,A_2)|
&\leq\sum_{a_1\in A_1}|T_{a_1}(A_2,\Omega_1(a_1))|\\
&\leq|A_1|\Big(M_1p^{1/2-\epsilon_1}+2^{7/8}M_1^{7/8}p^{9/16+\alpha/8-x/8}\Big),
\end{align*}
and applying \eqref{eq.2.9b} and the assumption $|\mathcal{A}|\leq p^\alpha$ then gives
\begin{equation*}
\leq 2p^{1-\epsilon_2}+2^{7/4}p^{1+\alpha/4-x/8}\leq2p^{1-\epsilon_2}+2^{7/4}p^{1-\epsilon_2}\leq p^{1-\epsilon_1-\epsilon},
\end{equation*}
where the last steps use \eqref{eq.FG}.
This completes the proof.

\subsection{Proof of Lemma~\ref{lemma.10}}

We start by following the proof of Lemma~10 in~\cite{BourgainDFKK:11}.
First, Cauchy--Schwarz along with \eqref{eq.2.9a} and \eqref{eq.2.9b} give
\begin{align*}
|T_{a_1}(A_2,B)|^2
&=\bigg|\sum_{\substack{a_2\in A_2\\b_2\in\Omega_2(a_2)}}\bigg(\frac{a_1-a_2}{p}\bigg)\cdot\sum_{b_1\in B}e_p\bigg(\frac{(b_1-b_2)^2}{4(a_1-a_2)}\bigg)\bigg|^2\\
&\leq 2\sqrt{p}\sum_{\substack{a_2\in A_2\\b_2\in\Omega_2(a_2)}}\bigg|\sum_{b_1\in B}e_p\bigg(\frac{(b_1-b_2)^2}{4(a_1-a_2)}\bigg)\bigg|^2.
\end{align*}
Expanding $|w|^2=w\overline{w}$ and applying the triangle inequality then gives
\begin{equation*}
=2\sqrt{p}\bigg|\sum_{\substack{a_2\in A_2\\b_2\in\Omega_2(a_2)}}\sum_{b_1,b\in B}e_p\bigg(\frac{b_1^2-b^2}{4(a_1-a_2)}-\frac{b_2(b_1-b)}{2(a_1-a_2)}\bigg)\bigg|
\leq2\sqrt{p}\sum_{b_1,b\in B}|F(b,b_1)|,
\end{equation*}
where
\begin{equation*}
F(b,b_1)
:=\sum_{\substack{a_2\in A_2\\b_2\in\Omega_2(a_2)}}e_p\bigg(\frac{b_1^2-b^2}{4(a_1-a_2)}-\frac{b_2(b_1-b)}{2(a_1-a_2)}\bigg).
\end{equation*}
Next, H\"{o}lder's inequality $\|F1\|_1\leq\|F\|_m\|1\|_{1-1/m}$ gives
\begin{equation}
\label{eq.6.8}
|T_{a_1}(A_2,B)|^2
\leq2\sqrt{p}|B|^{2-2/m}\bigg(\sum_{b_1,b\in B}|F(b,b_1)|^m\bigg)^{1/m}.
\end{equation}
To bound this, we use a change of variables $x:=b_1+b\in B+B$ and $y:=b_1-b\in B-B$ and sum over more terms:
\begin{equation*}
\sum_{b_1,b\in B}|F(b,b_1)|^m
\leq\sum_{\substack{x\in B+B\\y\in B-B}}\bigg|\sum_{\substack{a_2\in A_2\\b_2\in\Omega_2(a_2)}}e_p\bigg(\frac{xy}{4(a_1-a_2)}-\frac{b_2y}{2(a_1-a_2)}\bigg)\bigg|^m.
\end{equation*}
Expanding $|w|^m=w^{m/2}\overline{w}^{m/2}$ and applying the triangle inequality then gives
\begin{align*}
&=\bigg|\sum_{\substack{x\in B+B\\y\in B-B}}\sum_{\substack{a_2^{(i)}\in A_2\\b_2^{(i)}\in\Omega_2(a_2^{(i)})\\1\leq i\leq m}}\!\!\!\!\!e_p\bigg(\sum_{i=1}^{m/2}\Big[\tfrac{xy}{4(a_1-a_2^{(i)})}-\tfrac{b_2y}{2(a_1-a_2^{(i)})}-\tfrac{xy}{4(a_1-a_2^{(i+m/2)})}+\tfrac{b_2y}{2(a_1-a_2^{(i+m/2)})}\Big]\bigg)\bigg|\\
&\leq\sum_{y\in B-B}\sum_{\substack{a_2^{(i)}\in A_2\\b_2^{(i)}\in\Omega_2(a_2^{(i)})\\1\leq i\leq m}}\bigg|\sum_{x\in B+B}e_p\bigg(\frac{xy}{4}\sum_{i=1}^{m/2}\bigg[\frac{1}{a_1-a_2^{(i)}}-\frac{1}{a_1-a_2^{(i+m/2)}}\bigg]\bigg)\bigg|.
\end{align*}
Next, we apply \eqref{eq.2.9a} to bound the number of $m$-tuples of $b_2^{(i)}$'s for each $m$-tuple of $a_2^{(i)}$'s (there are less than $M_2^m$).
Combining this with the bound above, we know there are complex numbers $\epsilon_{y,\xi}$ of modulus $\leq1$ such that
\begin{equation}
\label{eq.6.9}
\sum_{b_1,b\in B}|F(b,b_1)|^m
\leq M_2^m\sum_{y\in B-B}\sum_{\xi\in\mathbb{F}_p}\lambda(\xi)\epsilon_{y,\xi}\sum_{x\in B+B}e_p(xy\xi/4),
\end{equation}
where
\begin{equation*}
\lambda(\xi)
:=\bigg|\bigg\{a^{(1)},\ldots,a^{(m)}\in A_2:\sum_{i=1}^{m/2}\bigg[\frac{1}{a_1-a^{(i)}}-\frac{1}{a_1-a^{(i+m/2)}}\bigg]=\xi\bigg\}\bigg|.
\end{equation*}
To bound the $\xi=0$ term in \eqref{eq.6.9}, pick $a^{(1)},\ldots,a^{(m)}\in A_2$ such that
\begin{equation}
\label{eq.xi=0}
\sum_{i=1}^{m/2}\bigg[\frac{1}{a_1-a^{(i)}}-\frac{1}{a_1-a^{(i+m/2)}}\bigg]=0.
\end{equation}
Then
\begin{equation*}
\sum_{i=1}^{m/2}\frac{1}{a_1-a^{(i)}}+\frac{m}{2}\cdot\frac{1}{a_1-a^{(1)}}
=\sum_{i=m/2+1}^{m}\frac{1}{a_1-a^{(i)}}+\frac{m}{2}\cdot\frac{1}{a_1-a^{(1)}},
\end{equation*}
and so by hypothesis~(a) in Theorem~\ref{theorem.machine}, we have that $(a^{(1)},\ldots,a^{(m/2)},a^{(1)},\ldots,a^{(1)})$ is a permutation of $(a^{(m/2+1)},\ldots,a^{(m)},a^{(1)},\ldots,a^{(1)})$, which in turn implies that $(a^{(1)},\ldots,a^{(m/2)})$ and $(a^{(m/2+1)},\ldots,a^{(m)})$ are permutations of each other.
Thus, all possible solutions to \eqref{eq.xi=0} are determined by $(a^{(1)},\ldots,a^{(m/2)})$.
There are $|A_2|^{m/2}$ choices for this $m/2$-tuple, and for each choice, there are $(m/2)!$ available permutations for $(a^{(m/2+1)},\ldots,a^{(m)})$.
As such,
\begin{equation}
\label{eq.6.10}
\lambda(0)=(m/2)!|A_2|^{m/2},
\end{equation}
which we will use later to bound the $\xi=0$ term.
In the meantime, we bound the remainder of \eqref{eq.6.9}.
To this end, it is convenient to define the following functions:
\begin{equation*}
\zeta'(z):=\sum_{\substack{y\in B-B\\\xi\in\mathbb{F}_p^*\\y\xi=z}}\epsilon_{y,\xi}\lambda(\xi),
\qquad
\zeta(z):=\sum_{\substack{y\in B-B\\\xi\in\mathbb{F}_p^*\\y\xi=z}}\lambda(\xi).
\end{equation*}
Note that $|\zeta'(z)|\leq\zeta(z)$ by the triangle inequality.
We use the triangle inequality and H\"{o}lder's inequality to bound the $\xi\neq0$ terms in \eqref{eq.6.9}:
\begin{align}
\nonumber
&\bigg|\sum_{y\in B-B}\sum_{\xi\in\mathbb{F}_p^*}\lambda(\xi)\epsilon_{y,\xi}\sum_{x\in B+B}e_p(xy\xi/4)\bigg|\\
\nonumber
&\qquad\qquad=\bigg|\sum_{\substack{x\in B+B\\z\in\mathbb{F}_p}}\zeta'(z)e_p(xz/4)\bigg|\\
\nonumber
&\qquad\qquad\leq\sum_{x\in\mathbb{F}_p}\bigg|1_{B+B}(x)\cdot\sum_{z\in\mathbb{F}_p}\zeta'(z)e_p(xz/4)\bigg|\\
\label{eq.about to convolve}
&\qquad\qquad\leq|B+B|^{3/4}\bigg(\sum_{x\in\mathbb{F}_p}\bigg|\sum_{z\in\mathbb{F}_p}\zeta'(z)e_p(xz/4)\bigg|^4\bigg)^{1/4}.
\end{align}
To proceed, note that
\begin{align*}
\bigg(\sum_{z\in\mathbb{F}_p}\zeta'(z)e_p(xz/4)\bigg)^2
&=\sum_{z,z''\in\mathbb{F}_p}\zeta'(z)\zeta'(z'')e_p(x(z+z'')/4)\\
&=\sum_{z'\in\mathbb{F}_p}(\zeta'*\zeta')(z')e_p(xz'/4),
\end{align*}
where the last step follows from a change of variables $z'=z+z''$.
With this and Parseval's identity, we continue \eqref{eq.about to convolve}:
\begin{align}
\nonumber
&=|B+B|^{3/4}\bigg(\sum_{x\in\mathbb{F}_p}\bigg|\sum_{z'\in\mathbb{F}_p}(\zeta'*\zeta')(z')e_p(xz'/4)\bigg|^2\bigg)^{1/4}\\
\nonumber
&=|B+B|^{3/4}\|\zeta'*\zeta'\|_2^{1/2}p^{1/4}\\
\label{eq.6.11}
&\leq|B+B|^{3/4}\|\zeta*\zeta\|_2^{1/2}p^{1/4},
\end{align}
where the last step follows from the fact that $|(\zeta'*\zeta')(z)|\leq(\zeta*\zeta)(z)$, which can be verified using the triangle inequality.
Since $\zeta(z)=\sum_{\xi\in\mathbb{F}_p^*}1_{B-B}(z/\xi)\lambda(\xi)$, the triangle inequality gives
\begin{align}
\|\zeta*\zeta\|_2
\nonumber
&=\bigg\|\bigg(\sum_{\xi\in\mathbb{F}_p^*}\lambda(\xi)1_{\xi(B-B)}\bigg)*\bigg(\sum_{\xi'\in\mathbb{F}_p^*}\lambda(\xi')1_{\xi'(B-B)}\bigg)\bigg\|_2\\
\nonumber
&\leq\sum_{\xi,\xi'\in\mathbb{F}_p^*}\lambda(\xi)\lambda(\xi')\|1_{\xi(B-B)}*1_{\xi'(B-B)}\|_2\\
\label{eq.6.12}
&=\sum_{\xi,\xi'\in\mathbb{F}_p^*}\lambda(\xi)\lambda(\xi')\|1_{B-B}*1_{(\xi'/\xi)(B-B)}\|_2,
\end{align}
where the last step follows from the (easily derived) fact that $1_{B-B}*1_{(\xi'/\xi)(B-B)}$ is a dilation of $1_{\xi(B-B)}*1_{\xi'(B-B)}$.

To bound \eqref{eq.6.12}, we will appeal to Corollary~2 in~\cite{BourgainDFKK:11}, which says that for any $A\subseteq\mathbb{F}_p$ and probability measure $\lambda$ over $\mathbb{F}_p$,
\begin{equation}
\label{corollary.2}
\sum_{b\in\mathbb{F}_p^*}\lambda(b)\|1_A*1_{bA}\|_2
\ll (\|\lambda\|_2+|A|^{-1/2}+|A|^{1/2}p^{-1/2})^{c_0}|A|^{3/2},
\end{equation}
where $\ll$ is Vinogradov notation; $f\ll g$ means $f=O(g)$.
As such, we need to construct a probability measure and understand its $2$-norm.
To this end, define
\begin{equation}
\label{eq.defn lambda1}
\lambda_1(\xi)
:=\frac{\lambda(\xi)}{\|\lambda\|_1}
=\frac{\lambda(\xi)}{|A_2|^m}.
\end{equation}
The sum $\sum_{\xi\in\mathbb{F}_p}\lambda(\xi)^2$ is precisely the number of solutions to
\begin{equation*}
\frac{1}{a_1-a^{(1)}}+\cdots+\frac{1}{a_1-a^{(m)}}-\frac{1}{a_1-a^{(m+1)}}-\cdots-\frac{1}{a_1-a^{(2m)}}=0,
\end{equation*}
which by hypothesis~(a) in Theorem~\ref{theorem.machine}, only has trivial solutions.
As such, we have
\begin{equation}
\label{eq.6.13}
\|\lambda\|_2^2=m!|A_2|^m.
\end{equation}
At this point, define $\lambda_1'(b)$ to be $\lambda_1(\xi'/b)$ whenever $b\neq0$ and $\lambda_1(0)$ otherwise.
Then $\lambda_1'$ is a probability measure with the same $2$-norm as $\lambda_1$, but it allows us to directly apply \eqref{corollary.2}:
\begin{align}
\nonumber
&\sum_{\xi\in\mathbb{F}_p^*}\lambda_1(\xi)\|1_{B-B}*1_{(\xi'/\xi)(B-B)}\|_2\\
\nonumber
&\qquad\qquad=\sum_{b\in\mathbb{F}_p^*}\lambda_1'(b)\|1_{B-B}*1_{b(B-B)}\|_2\\
\label{eq.6.14}
&\qquad\qquad\ll (\|\lambda_1\|_2+|B-B|^{-1/2}+|B-B|^{1/2}p^{-1/2})^{c_0}|B-B|^{3/2}.
\end{align}

At this point, our proof deviates from the proof of Lemma~10 in~\cite{BourgainDFKK:11}.
By \eqref{eq.defn lambda1}, \eqref{eq.6.13} and \eqref{eq.6.2}, we have
\begin{equation*}
\|\lambda_1\|_2=|A_2|^{-m}\|\lambda\|_2\leq\sqrt{m!}|A_2|^{-m/2}\leq\sqrt{m!}p^{-my/2},
\end{equation*}
Next, \eqref{eq.6.3} and \eqref{eq.6.4} together give
\begin{equation*}
|B-B|\geq|B|\geq p^{1/2-\alpha_1}
\end{equation*}
and
\begin{equation*}
|B-B|\leq p^{\alpha_2}|B|\leq p^{1/2+\alpha_2}.
\end{equation*}
Thus,
\begin{align}
\|\lambda_1\|_2+|B-B|^{-1/2}+|B-B|^{1/2}p^{-1/2}
\nonumber
&\leq \sqrt{m!}p^{-my/2}+p^{\alpha_1/2-1/4}+p^{\alpha_2/2-1/4}\\
\label{eq.bound on c0 part}
&\leq p^{-my/2+4m\epsilon},
\end{align}
where the last step follows from \eqref{eq.H}.
So, by \eqref{eq.6.12}, \eqref{eq.defn lambda1}, \eqref{eq.6.14} and \eqref{eq.bound on c0 part}, we have
\begin{align}
\|\zeta*\zeta\|_2
\nonumber
&\leq|A_2|^{2m}\sum_{\xi'\in\mathbb{F}_p^*}\lambda_1(\xi')\sum_{\xi\in\mathbb{F}_p^*}\lambda_1(\xi)\|1_{B-B}*1_{(\xi'/\xi)(B-B)}\|_2\\
\nonumber
&\ll|A_2|^{2m}(\|\lambda_1\|_2+|B-B|^{-1/2}+|B-B|^{1/2}p^{-1/2})^{c_0}|B-B|^{3/2}\\
\label{eq.bound on zeta*zeta}
&\leq|A_2|^{2m}p^{-(c_0/2)my+4c_0m\epsilon}|B-B|^{3/2},
\end{align}
and subsequent application of \eqref{eq.6.9}, \eqref{eq.6.10}, \eqref{eq.6.11} and \eqref{eq.bound on zeta*zeta} gives
\begin{align}
\sum_{b_1,b\in B}|F(b,b_1)|^m
\nonumber
&\leq(\tfrac{m}{2})!(M_2|A_2|)^m|A_2|^{-m/2}|B-B||B+B|\\
\label{eq.***}
&+O(M_2^m|A_2|^m|B-B|^{3/4}|B+B|^{3/4}p^{-(c_0/4)my+2c_0m\epsilon}p^{1/4}).
\end{align}
By Lemma~4 in~\cite{BourgainDFKK:11} (which states that $|A+A|\leq|A-A|^2/|A|$), condition \eqref{eq.6.4} implies
\begin{equation*}
|B+B|\leq\frac{|B-B|^2}{|B|}\leq p^{2\alpha_2}|B|.
\end{equation*}
We now use this with \eqref{eq.2.9b}, \eqref{eq.6.2} and \eqref{eq.6.4} to bound \eqref{eq.***}:
\begin{equation*}
\ll(\tfrac{m}{2})!(2\sqrt{p})^mp^{-my/2}p^{3\alpha_2}|B|^2+(2\sqrt{p})^mp^{(9/4)\alpha_2}|B|^{3/2}p^{-(c_0/4)my+2c_0m\epsilon}p^{1/4}
\end{equation*}
Next, the left-hand inequality of \eqref{eq.6.3} gives that $p^{1/4}\leq|B|^{1/2}p^{\alpha_1/2}$, leading to the following bound:
\begin{equation*}
\ll|B|^2p^{m/2-my/2+3\alpha_2}+|B|^2p^{m/2+\alpha_1/2+(9/4)\alpha_2-(c_0/4)my+2c_0m\epsilon}.
\end{equation*}
Overall, we have
\begin{equation*}
\displaystyle{\sum_{b_1,b\in B}|F(b,b_1)|^m\leq2^{-m}|B|^2p^{m/2+\alpha_1/2+(9/4)\alpha_2-(c_0/4)my+2m\epsilon}},
\end{equation*}
since $c_0<1$, and $3\alpha_2-2\alpha_1\leq(2-c_0)my$ (i.e., \eqref{eq.I}).
Thus, \eqref{eq.6.8} gives
\begin{align*}
|T(A_2,B)|^2
&\leq\sqrt{p}|B|^{2-2/m}(|B|^2p^{m/2+\alpha_1/2+(9/4)\alpha_2-(c_0/4)my+2m\epsilon})^{1/m}\\
&=|B|^2p^{1-(c_0y/4-\alpha_1/(2m)-(9\alpha_2)/(4m))+2\epsilon}.
\end{align*}
Finally, taking square roots produces the result.

\begin{acknowledgement}
The author was supported by NSF Grant No.\ DMS-1321779.
The views expressed in this chapter are those of the author and do not reflect the official policy or position of the United States Air Force, Department of Defense, or the U.S.\ Government.
\end{acknowledgement}


\begin{thebibliography}{WW}

\bibitem{ApplebaumHSC:09}
L.\ Applebaum, S.\ D.\ Howard, S.\ Searle, R.\ Calderbank,
Chirp sensing codes: Deterministic compressed sensing measurements for fast recovery,
Appl.\ Comput.\ Harmon.\ Anal.\ 26 (2009) 283--290.

\bibitem{BandeiraDMS:13}
A.\ S.\ Bandeira, E.\ Dobriban, D.\ G.\ Mixon, W.\ F.\ Sawin,
Certifying the restricted isometry property is hard,
IEEE Trans.\ Inform.\ Theory 59 (2013) 3448--3450.

\bibitem{BandeiraFMW:13}
A.\ S.\ Bandeira, M.\ Fickus, D.\ G.\ Mixon, P.\ Wong,
The road to deterministic matrices with the restricted isometry property,
J.\ Fourier Anal.\ Appl.\ 19 (2013) 1123--1149.

\bibitem{BaraniukDDW:08}
R.\ Baraniuk, M.\ Davenport, R.\ DeVore, M.\ Wakin,
A simple proof of the restricted isometry property for random matrices,
Constr.\ Approx.\ 28 (2008) 253--263.

\bibitem{BourgainDFKK:11}
J.\ Bourgain, S.\ J.\ Dilworth, K.\ Ford, S.\ Konyagin, D.\ Kutzarova,
Explicit constructions of RIP matrices and related problems,
Duke Math.\ J.\ 159 (2011) 145--185.

\bibitem{BourgainG:09}
J.\ Bourgain, M.\ Z.\ Garaev,
On a variant of sum-product estimates and explicit exponential sum bounds in prime fields,
Math.\ Proc.\ Cambridge Philos.\ Soc.\ 146 (2009) 1--21.

\bibitem{BourgainG:11}
J.\ Bourgain, A.\ Glibichuk,
Exponential sum estimate over subgroup in an arbitrary finite field,
Available online: \url{http://www.math.ias.edu/files/avi/Bourgain_Glibichuk.pdf}

\bibitem{Candes:08}
E.\ J.\ Cand\`{e}s,
The restricted isometry property and its implications for compressed sensing,
C.\ R.\ Acad.\ Sci.\ Paris, Ser.\ I 346 (2008) 589--592.

\bibitem{CasazzaF:06}
P.\ G.\ Casazza, M.\ Fickus,
Fourier transforms of finite chirps,
EURASIP J.\ Appl.\ Signal Process.\ 2006 (2006).

\bibitem{Devore:07}
R.\ A.\ DeVore,
Deterministic constructions of compressed sensing matrices,
J.\ Complexity 23 (2007) 918--925.

\bibitem{FickusMT:12}
M.\ Fickus, D.\ G.\ Mixon, J.\ C.\ Tremain,
Steiner equiangular tight frames,
Linear Algebra Appl.\ 436 (2012) 1014--1027.

\bibitem{FoucartR:13}
S.\ Foucart, H.\ Rauhut,
A Mathematical Introduction to Compressive Sensing,
Berlin, Springer, 2013.

\bibitem{KoiranZ:12}
P.\ Koiran, A.\ Zouzias,
Hidden cliques and the certification of the restricted isometry property,
Available online: arXiv:1211.0665

\bibitem{Mixon:13a}
D.\ G.\ Mixon,
Deterministic RIP matrices: Breaking the square-root bottleneck,
Short, Fat Matrices (weblog)
\url{http://dustingmixon.wordpress.com/2013/12/02/deterministic-rip-matrices-breaking-the-square-root-bottleneck/}

\bibitem{Mixon:13b}
D.\ G.\ Mixon,
Deterministic RIP matrices: Breaking the square-root bottleneck, II,
Short, Fat Matrices (weblog)
\url{http://dustingmixon.wordpress.com/2013/12/11/deterministic-rip-matrices-breaking-the-square-root-bottleneck-ii/}

\bibitem{Mixon:13c}
D.\ G.\ Mixon,
Deterministic RIP matrices: Breaking the square-root bottleneck, III,
Short, Fat Matrices (weblog)
\url{http://dustingmixon.wordpress.com/2014/01/14/deterministic-rip-matrices-breaking-the-square-root-bottleneck-iii/}

\bibitem{Welch:74}
L.\ R.\ Welch,
Lower bounds on the maximum cross correlation of signals,
IEEE Trans.\ Inform.\ Theory 20 (1974) 397--399.

\bibitem{Tao:07}
T.\ Tao,
Open question: deterministic UUP matrices,
What's new (weblog)
\url{http://terrytao.wordpress.com/2007/07/02/open-question-deterministic-uup-matrices/}

\bibitem{TaoV:06}
T.\ Tao, V.\ H.\ Vu,
Additive Combinatorics,
Cambridge U.\ Press, 2006.

\end{thebibliography}
\end{document}